\newtheorem{theorem}{Theorem}
\theoremstyle{plain}
\newtheorem{corollary}{Corollary}
\newtheorem{definition}{Definition}
\newtheorem{example}{Example}
\newtheorem{lemma}{Lemma}
\newtheorem{remark}{Remark}
\numberwithin{equation}{section}
\begin{document}
\title{An extension of the normed dual functors}
\author{Nikica Ugle\v{s}i\'{c}}
\address{Sv. Ante 9, 23287 Veli R\aa t, Hrvatska (Croatia)}
\email{nuglesic@unizd.hr.}
\date{October 26, 2018}
\subjclass[2000]{[2010]: Primary 46M40, Secondary 18BXX }
\keywords{normed (Banach) space, (iterated) dual normed space(s), (somewhat;
quasi-) reflexive Banach space, algebraic dimension, continuum hypothesis,
(iterated) normed dual functor(s), pro-category, (co)limit. }
\thanks{This paper is in final form and no version of it will be submitted
for publication elsewhere.}

\begin{abstract}
By means of the direct limit technique, with every normed space $X$ it is
associated a\emph{\ bidualic} (Banach) space $\tilde{X}$ ($D^{2}(\tilde{X}%
)\cong \tilde{X}$ - called the \emph{hyperdual} of $X$) that contains
(isometrically embedded) $X$ as well as all the even (normed) duals $%
D^{2n}(X)$, which make an increasing sequence of the category retracts. The
algebraic dimension $\dim \tilde{X}=\dim X$ ($\dim \tilde{X}=2^{\aleph _{0}}$%
), whenever $\dim X\neq \aleph _{0}$, ($\dim X=\aleph _{0}$). Furthermore,
the correspondence $X\mapsto \tilde{X}$ extends to a faithful covariant
functor (called the \emph{hyperdual functor}) on the category of normed
spaces.
\end{abstract}

\maketitle

\section{Introduction}

In several recent papers (the last two are [15, 16]) the author was solving
the problem of the quotient shape classification of normed vectorial spaces
(especially, the finite quotient shape type classification), which was
initiated by a basic consideration in [14]. Since in a quotient shape theory
the main role play the infinite cardinal numbers, the usual bipolar
separation \textquotedblleft finite-dimensional versus
infinite-dimensional\textquotedblright\ of normed spaces is quite
unsatisfactory. Namely, the class of all infinite-dimensional normed spaces
had to be refined according to General Continuum Hypothesis ($GCH$), and it
had become obvious that the special bases (topological, Schauder, \ldots )
cannot help in solving the problem. The only way has led trough the strict
division by the cardinalities of algebraic (Hamel) bases. This was further
leading to the\emph{\ normed} dual spaces and their algebraic dimensions.
Surprisingly, the author discovered that the inequality $\dim X\leq \dim
X^{\ast }$ was not refined in general. Since this subproblem severely
limited the study of the main one, the author focused his attention to its
solution. In [16], Theorem 4 (by using the shape theory technique), the
answer is given: $\dim X^{\ast }=\dim X$, whenever $\dim X\neq \aleph _{0}$,
while $\dim X=\aleph _{0}$ implies $\dim X^{\ast }=2^{\aleph _{0}}$.
Consequently, every normed dual of every Banach space retains the algebraic
dimension of the space. When, in addition, it became clear that every
canonical embedding of a dual space into its second dual spaces is a
categorical section ([16], Lemma 1 (i) and Theorem 1), the idea of a
consistent embedding of all iterated even (odd) duals into the same Banach
(\textquotedblleft hyperdual\textquotedblright ) space came by itself.

In the realization of the mentioned idea, a property rather close to the
reflexivity (as much as possible) is desired and expected. According to the
result and the example of [8], the first candidates was the somewhat
reflexivity [1, 2]. However, that property (though rather suitable and
useful for a local analysis) is little inappropriate for a global
categorical consideration. Thus (keeping in mind the example of [8]), we had
desired to get an isometric isomorphism between the associated space and its
second dual space. That property is called the \emph{parareflexivity}. By
dropping \textquotedblleft isometric\textquotedblright , the notion of a 
\emph{bidualic} (originally, \emph{bidual-like}) normed space was introduced
in [15], and it also has seemed to be an acceptable one for our final goal.
By adding the somewhat reflexivity to parareflexivity, the obtained notion
of \emph{almost reflexivity} is also considered.

By this work we have succeeded (Theorem 2) to associate with every normed
space $X$ a bidualic (Banach) space $\tilde{X}$, i.e., $D^{2}(\tilde{X}%
)\cong \tilde{X}$, called a \emph{hypercdual} space of $X$, such that $%
\tilde{X}$ contains (canonically embedded) $X$ and all the iterated even
duals $D^{2n}(X)$. Moreover, those duals make a consistently increasing
sequence of category retracts of $\tilde{X}$ having the universal property
(of a direct limit) with respect to the normed spaces and morphisms of norm $%
\leq 1$. Furthermore, the algebraic dimension $\dim \tilde{X}=\dim X$ ($\dim 
\tilde{X}=2^{\aleph _{0}}$), whenever $\dim X\neq \aleph _{0}$ ($\dim
X=\aleph _{0}$). Further (Theorem 3), the correspondence $X\mapsto \tilde{X}$
extends to a faithful covariant functor (called the \emph{hyperdual functor}%
) on the category of normed spaces such that, for every $k\in \{0\}\cup 
\mathbb{N}$, $\tilde{D}D^{2k}=\tilde{D}$ and, for every $X$, $D^{2k}\tilde{D}%
(X)\cong \tilde{D}(X)$. Furthermore, $\tilde{D}$ preserves the
parareflexivity, quasi-reflexivity and reflexivity.

The main working technique is based on the direct limits of the direct
sequences in $i\mathcal{N}_{F}$ (isometries of normed spaces) and the
corresponding in-morphisms between such sequences that admit representatives
having the terms of norm $\leq 1$.

\section{Preliminaries}

We shall implicitly use and apply in the sequel many general and some
special well known facts without referring to any source. Therefore, we
remind a reader that

\noindent - the needed set theoretic and topological facts can be found in
[5];

\noindent - the fundamental facts concerning vectorial, normed and Banach
spaces are learned from [9], [10] and [12];

\noindent\ - the \textquotedblleft categorical Banach space
theory\textquotedblright\ is that of [3] and [6];

\noindent - our category theory terminology strictly follows [7].

\noindent Nevertheless, at least for technical reasons, we think that the
very basic of the categorical approach to normed and Banach spaces (see also
[3, 6]) should be recalled.

Let $\mathcal{V}_{F}$, denote the category of all vectorial spaces over a
field $F$ and all the corresponding linear function. Let $\mathcal{N}_{F}$
denote the category of all normed vectorial spaces and all the corresponding
continuous linear function, whenever $F\in \{\mathbb{R},\mathbb{C}\}$, and
let $\mathcal{B}_{F}$ be the full subcategory of $\mathcal{N}_{F}$
determined by all Banach (i.e., complete normed) spaces. Let

$D:\mathcal{N}_{F}\rightarrow \mathcal{N}_{F}$

\noindent be the normed dual functor, i.e., the contravariant $Hom_{F}$
functor

$D(X)=X^{\ast }$ - the (normed) dual space of $X$,

$D(f:X\rightarrow Y)\equiv D(f)\equiv f^{\ast }:Y^{\ast }\rightarrow X^{\ast
}$, $D(f)(y^{1})=y^{1}f$.

\noindent Then $D[\mathcal{N}_{F}]\subseteq \mathcal{B}_{F}$ and,
furthermore, for every ordered pair $X,Y\in Ob(\mathcal{N}_{F})$, the
function

$D_{Y}^{X}:\mathcal{N}_{F}(X,Y)\equiv L(X,Y)\rightarrow L(Y^{\ast },X^{\ast
})\equiv \mathcal{N}_{F}(Y^{\ast },X^{\ast })$

\noindent is a linear isometry ($\left\Vert D(f)\right\Vert =\left\Vert
f\right\Vert $), and hence, $D$ is a faithful functor.

Further, there exists a covariant Hom-functor

$Hom_{F}^{2}\equiv D^{2}:\mathcal{N}_{F}\rightarrow \mathcal{N}_{F}$,

$D^{2}(X)=D(D(X))\equiv X^{\ast \ast }$ - the (normed) second dual space of $%
X$,

$D^{2}(f:X\rightarrow Y)\equiv D(D(f))\equiv f^{\ast \ast }:X^{\ast \ast
}\rightarrow Y^{\ast \ast }$,

$D^{2}(f)(x^{2})=x^{2}D(f)$.

\noindent Then, clearly, $D^{2}[\mathcal{N}_{F}]\subseteq \mathcal{B}_{F}$
and, for every ordered pair $X,Y\in Ob(\mathcal{N}_{F})$, the function

$(D^{2})_{Y}^{X}:\mathcal{N}_{F}(X,Y)\equiv L(X,Y)\rightarrow L(X^{\ast \ast
},Y^{\ast \ast })\equiv \mathcal{N}_{F}(X^{\ast \ast },Y^{\ast \ast })$

\noindent is a linear isometry ($\left\Vert D^{2}(f)\right\Vert =\left\Vert
f\right\Vert $), and thus, $D^{2}$ is a faithful functor.

\noindent The most useful fact hereby is the existence of a certain natural
transformation $j:1_{\mathcal{N}_{F}}\rightsquigarrow D^{2}$ of the
functors, where, for every $X\in Ob(\mathcal{N}_{F})$, $j_{X}:X\rightarrow
D^{2}(X)$ is an isometric embedding (the \emph{canonical} embedding defined
by $j_{X}(x)\equiv x_{x}^{2}\in D^{2}(X)$, $x\in X$, such that, for every $%
x^{1}\in D(X)$, $x_{x}^{2}(x^{1})=x^{1}(x)\in F$), and the closure $%
Cl(R(j_{X})\subseteq D^{2}(X)$ is the well known (Banach) completion of $X$.
Namely, if $X,Y\in Ob(\mathcal{N}_{F})$, then

$(\forall f\in \mathcal{N}_{F}(X,Y))$, $j_{Y}f=D^{2}(f)j_{X}$

\noindent holds true. Clearly, if $X$ is a Banach space, then the canonical
embedding $j_{X}$ is closed. Continuing by induction, for every $k\in 
\mathbb{N}$, $k>2$, there exists a faithful $Hom_{F}$-functor $D^{k}$ of $%
\mathcal{N}_{F}$ to $\mathcal{N}_{F}$ such that $D^{k}[\mathcal{N}%
_{F}]\subseteq \mathcal{B}_{F}$, $D^{k}$ is contravariant (covariant)
whenever $k$ is odd (even), and for every ordered pair $X$, $Y$ of normed
spaces, the function $(D^{k})_{Y}^{X}$ is an isometric linear morphism of
the normed space $L(X,Y)$ to the Banach space $L(D^{k}(Y),D^{k}(X))$
whenever $k$ is odd ($L(D^{k}(X),D^{k}(Y))$ whenever $k$ is even). Further,
for every $k\in \{0\}\cup \mathbb{N}$, there exists a natural transformation
of the functors $j^{k}:D^{k}\rightsquigarrow D^{k+2}$, where $j^{0}\equiv
j:1_{\mathcal{N}_{F}}\rightsquigarrow D^{2}$ and, for every $k>0$, $j^{k}$
is determined by the class $\{j_{D^{k}(X)}\mid X\in Ob(\mathcal{N}_{F})\}$.
Consequently, there exist the composite natural transformations $%
j^{2k-2}\cdots j^{0}:1_{\mathcal{N}_{F}}\rightsquigarrow D^{2k}$ as well.

\section{Some special limits of normed spaces}

Denote by $i\mathcal{N}_{F}\subseteq \mathcal{N}_{F}$ ($i\mathcal{B}%
_{F}\subseteq \mathcal{B}_{F}$) the subcategory having $Ob(i\mathcal{N}%
_{F})=Ob(\mathcal{N}_{F})$ ($Ob(i\mathcal{B}_{F})=Ob(\mathcal{B}_{F})$) and
for the morphism class $Mor(i\mathcal{N}_{F})$ ($Mor(i\mathcal{B}_{F})$) all
the isometries of $Mor(\mathcal{N}_{F})$ ($Mor(\mathcal{B}_{F})$). Further,
we shall need the subcategories of $\mathcal{B}_{F}\subseteq \mathcal{N}_{F}$
determined by all the contractive morphisms $f$, i.e., for each $x$, $%
\left\Vert f(x)\right\Vert \leq \left\Vert x\right\Vert $, as well as those
determined by all the morphisms having norm $\left\Vert f\right\Vert \leq 1$%
. These are denoted by the subscript $1$ and superscript $1$ respectively.
Clearly, $i\mathcal{N}_{F}\subseteq (\mathcal{N}_{F})_{1}\subseteq (\mathcal{%
N}_{F})^{1}$ and $i\mathcal{B}_{F}\subseteq (\mathcal{B}_{F})_{1}\subseteq (%
\mathcal{B}_{F})^{1}$. We shall also need the sequential in-categories $(%
\func{seq}$-$i\mathcal{N}_{F})_{1}\subseteq (\func{seq}$-$i\mathcal{N}%
_{F})^{1}$ (subcategories of $in$-$\mathcal{N}_{F}=(dir$-$\mathcal{N}%
_{F})/\simeq $) of all direct sequences in $i\mathcal{N}_{F}$ and all the
corresponding (in-)morphisms $\boldsymbol{f}$ admitting representatives $%
(\phi ,f_{n})$ such that all $f_{n}$ belong to $(\mathcal{N}%
_{F})_{1}\subseteq (\mathcal{N}_{F})^{1}$, respectively, and similarly, the
sequential in-categories $(\func{seq}$-$i\mathcal{B}_{F})_{1}\subseteq (%
\func{seq}$-$i\mathcal{B}_{F})^{1}$.

Further, given a functor $F:\mathcal{C}\rightarrow \mathcal{D}$, the $F$%
-image $F[\mathcal{C}]$ is a subcategory of $\mathcal{D}$. We shall need in
the sequel the $D^{2k}$-image and $D^{2k+1}$-image, $k\in \{0\}\cup \mathbb{N%
}$, of the mentioned (sub)categories.

Recall that by the main result of [13] (see also [3], Section 4. (b),
Theorem 4. 1), in the subcategory $(\mathcal{B}_{F})_{1}\subseteq \mathcal{B}%
_{F}$ there exist direct and inverse limits of the corresponding systems.
However, we need a more special and somewhat more general results.

\begin{lemma}
\label{L1}There exist the direct limit functors

$\underrightarrow{\lim }:(\func{seq}$-$i\mathcal{N}_{F})_{1}\rightarrow (%
\mathcal{N}_{F})_{1}$ \quad and

$\underrightarrow{\lim }:(\func{seq}$-$i\mathcal{N}_{F})^{1}\rightarrow (%
\mathcal{N}_{F})^{1}$

\noindent such that, for every $\boldsymbol{X}=(X_{n},i_{nn^{\prime }},%
\mathbb{N})\in Ob(\func{seq}$-$i\mathcal{N}_{F})$,

$\underrightarrow{\lim }\boldsymbol{X}\equiv (X,i_{n})$

\noindent belongs to $i\mathcal{N}_{F}$, and, for every $\boldsymbol{f}\in (%
\func{seq}$-$i\mathcal{N}_{F})^{1}(\boldsymbol{X},\boldsymbol{X}^{\prime })$%
, $\underrightarrow{\lim }\boldsymbol{f}$ belongs to $(\mathcal{N}_{F})_{1}$%
. Furthermore, for every $\boldsymbol{f}\in (\func{seq}$-$i\mathcal{N}_{F})(%
\boldsymbol{X},\boldsymbol{X}^{\prime })$, if $\boldsymbol{f}$ admits a
representative $(\phi ,f_{n})$ such that, for every $n\in \mathbb{N}$, $f_{n}
$ is an isometry (isometric isomorphism), then

$\underrightarrow{\lim }\boldsymbol{f}:\underrightarrow{\lim }\boldsymbol{X}%
\rightarrow \underrightarrow{\lim }\boldsymbol{X}^{\prime }$

\noindent is an isometry (isometric isomorphism).
\end{lemma}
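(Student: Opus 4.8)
The plan is to build the direct limit as the algebraic colimit equipped with the coherent norm, and then to read off every asserted property from the fact that each element of the limit lives at a finite stage.

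First I would construct the object. Given $\boldsymbol{X}=(X_n,i_{nn'},\mathbb{N})\in Ob(\func{seq}$-$i\mathcal{N}_F)$, form the algebraic direct limit $X=\underrightarrow{\lim}_n X_n=(\bigsqcup_n X_n)/\!\sim$, where $x\in X_n$ is identified with $i_{nn'}(x)\in X_{n'}$ for $n\le n'$. Because every bonding map $i_{nn'}$ is an isometry, the assignment $\|[x]\|=\|x\|_{X_n}$ does not depend on the chosen representative, and it defines a norm on $X$; thus $X\in Ob(\mathcal{N}_F)$. The canonical maps $i_n\colon X_n\to X$, $i_n(x)=[x]$, are linear and satisfy $\|i_n(x)\|=\|x\|$, so each $i_n$ is an isometry and $i_{n'}i_{nn'}=i_n$. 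Hence $(X,i_n)$ is a cone in $i\mathcal{N}_F$, which establishes $\underrightarrow{\lim}\boldsymbol{X}\equiv(X,i_n)\in i\mathcal{N}_F$. Note that $X$ need not be complete, so no passage to a completion is made, consistent with the target being all of $(\mathcal{N}_F)_1$ resp. $(\mathcal{N}_F)^1$.

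Next I would verify the universal property and thereby define the functor on morphisms. Let $\boldsymbol{f}\in(\func{seq}$-$i\mathcal{N}_F)^1(\boldsymbol{X},\boldsymbol{X}')$ with a representative $(\phi,f_n)$, where $\phi$ is increasing and $f_n\colon X_n\to X'_{\phi(n)}$ satisfies $i'_{\phi(n)\phi(n+1)}f_n=f_{n+1}i_{n,n+1}$ and $\|f_n\|\le 1$. Define $\underrightarrow{\lim}\boldsymbol{f}\colon X\to X'$ by $[x]\mapsto[f_n(x)]$ for $x\in X_n$; the coherence relations make this independent of the representing index $n$, and linearity is immediate. The crucial estimate is that for $\xi=[x]$ with $x\in X_n$ one has $\|\underrightarrow{\lim}\boldsymbol{f}(\xi)\|=\|f_n(x)\|\le\|f_n\|\,\|x\|\le\|\xi\|$; since every point of $X$ is represented at a finite stage, this shows $\underrightarrow{\lim}\boldsymbol{f}$ is contractive, i.e. it belongs to $(\mathcal{N}_F)_1$. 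The same formula defines both limit functors, the sharper contractive bound being what the displayed estimate delivers. Uniqueness of the mediating morphism is automatic because $X=\bigcup_n i_n(X_n)$, so a morphism out of $X$ is determined by its restrictions to the $i_n(X_n)$; preservation of identities and composites then follows from the corresponding relations for representatives.

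For the final assertion, suppose $(\phi,f_n)$ can be chosen with every $f_n$ an isometry. Then the displayed estimate becomes the equality $\|\underrightarrow{\lim}\boldsymbol{f}(\xi)\|=\|f_n(x)\|=\|x\|=\|\xi\|$, so $\underrightarrow{\lim}\boldsymbol{f}$ is an isometry. If in addition each $f_n$ is an isometric isomorphism, I would prove surjectivity: given $\eta=[y']$ with $y'\in X'_m$, use that $\phi$ is cofinal to pick $n$ with $\phi(n)\ge m$, set $z=i'_{m\phi(n)}(y')\in X'_{\phi(n)}$, and take the unique $x\in X_n$ with $f_n(x)=z$; then $\underrightarrow{\lim}\boldsymbol{f}([x])=[z]=[y']=\eta$, so $\underrightarrow{\lim}\boldsymbol{f}$ is an isometric isomorphism.

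The step I expect to be the main obstacle is not any of the norm inequalities, which are one-line consequences of computing norms at finite stages, but the bookkeeping ensuring that $\underrightarrow{\lim}\boldsymbol{f}$ depends only on the in-morphism $\boldsymbol{f}$ and not on its chosen representative $(\phi,f_n)$: two equivalent representatives agree only after further composition with bonding maps, and one must check that the induced maps on the colimit coincide on every class $[x]$. Closely related is the need, in the isometric-isomorphism case, to know that $\phi$ may be taken cofinal; I would handle this by passing to a cofinal representative of $\boldsymbol{f}$, which is permitted within the equivalence defining $in$-$\mathcal{N}_F$.
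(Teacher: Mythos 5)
Your proposal is correct and follows essentially the same route as the paper: the quotient of the disjoint union with the norm inherited from finite stages (the paper organizes this via the minimal ``grain'' representative, which is only a bookkeeping device for the same computation), the contractivity estimate read off at a finite stage, and the cofinality argument for surjectivity in the isomorphism case. The well-definedness issue you flag is handled in the paper exactly as you suggest, by passing to a special (increasing, strictly commuting) representative of the in-morphism.
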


\begin{proof}
Let a direct sequence $\boldsymbol{X}=(X_{n},i_{nn^{\prime }},\mathbb{N})$
in $i\mathcal{N}_{F}$ be given, Consider the disjoint union $\sqcup _{n\in 
\mathbb{N}}X_{n}$ and the binary relation on it defined by

$x_{n}\sim x_{n^{\prime }}^{\prime }\Leftrightarrow (i_{nn^{\prime
}}(x_{n})=x_{n^{\prime }}^{\prime }$ $\vee $ $i_{n^{\prime }n}(x_{n^{\prime
}}^{\prime })=x_{n})$,

\noindent where $x_{n}\in X_{n}$ and $x_{n^{\prime }}^{\prime }\in
X_{n^{\prime }}$. One readily verifies that $\sim $ is an equivalence
relation on $\sqcup _{n\in \mathbb{N}}X_{n}$. Let

$X=(\sqcup _{n\in \mathbb{N}}X_{n})/\sim $

\noindent be the corresponding quotient set. Since all $i_{nn^{\prime }}$
are monomorphisms, for every $x=[x_{n}]\in X$, there exist a unique
(minimal) $n(x)\in \mathbb{N}$ and a unique $x_{n(x)}\in X_{n(x)}$ (the 
\emph{grain} of $x$) such that

$x=[x_{n(x)}]$ $\wedge $ $x_{n(x)}\notin
i_{n(x)-1,n(x)}[X_{n(x)-1}]\trianglelefteq X_{n(x)}$, ($X_{0}\equiv
\varnothing $).

\noindent Furthermore, for every $x\in X$ and every $n\geq n(x)$, there is a
unique $x_{n}=i_{n(x)n}(x_{n(x)}\in X_{n}$ such that $[x_{n}]=[x_{n(x)}]=x$.
And conversely, for every $n$ and every $x_{n}\in X_{n}$, there is a unique $%
x=[x_{n}]\in X$ having the grain $x_{n(x)}\in X_{n(x)}$, $x_{n(x)}\sim x_{n}$
and $n(x)\leq n$. Consequently, every element $x\in X$ is a unique sequence $%
(i_{n(x)n^{\prime }}(x_{n(x)}))_{n^{\prime }\geq n(x)}$, which may be
identified with the vector $x_{n(x)}\in X_{n(x)}\setminus R(i_{n(x)-1,n(x)})$
as well as with the vector $x_{n}\in X_{n}\setminus R(i_{n-1,n}$, $n\geq
n(x) $, and vice versa. Given any $x^{\prime }=[x_{n^{\prime }}^{\prime
}],x^{\prime \prime }=[x_{n^{\prime \prime }}^{\prime \prime }]\in X$, let
us consider

$x_{n_{2}}=i_{n_{1}n_{2}}(y_{n_{1}}^{\prime })+y_{n_{2}}^{\prime \prime }\in
X_{n_{2}}$

\noindent where $n_{1}=\min \{n^{\prime },n^{\prime \prime }\}$, $n_{2}=\max
\{n^{\prime },n^{\prime \prime }\}$, $\{y_{n_{1}}^{\prime
},y_{n_{2}}^{\prime \prime }\}=\{x_{n^{\prime }}^{\prime },x_{n^{\prime
\prime }}^{\prime \prime }\}$ and \textquotedblleft +\textquotedblright\ on
the right side is the addition in $X_{n_{2}}$. Then, for every $n\geq n_{2}$,

$x_{n}=x$'$_{n}+x_{n}^{\prime \prime }\sim i_{n_{1}n_{2}}(y_{n_{1}}^{\prime
})+y_{n_{2}}^{\prime \prime }=x_{n_{2}}$.

\noindent This shows that one can well define

$+:X\times X\rightarrow X$, $+(x^{\prime },x^{\prime \prime })\equiv
x^{\prime }+x^{\prime \prime }=x=[x_{n_{2}}]$,$\in X.$

\noindent (Notice that $n(x)=n_{x^{\prime },x^{\prime \prime }}$ formally
depending on $x^{\prime }$ and $x^{\prime \prime }$, actually depends on the 
$x^{\prime }+x^{\prime \prime }$ only, i.e., it is the unique $n(x^{\prime
}+x^{\prime \prime })$. Namely, if $x^{1}+x^{2}=x^{\prime }+x^{\prime \prime
}=x$, then one readily sees that $n_{x^{1},x^{2}}=n_{x^{\prime },x^{\prime
\prime }}=n(x)\leq \max \{n_{x^{\prime }},n_{x^{\prime \prime }}\}$.) It is
now a routine to verifies that $(X,+)$ is an Abelian group. (For instance,
in order to verify that $(x^{\prime }+x^{\prime \prime })+x^{\prime \prime
\prime }=x^{\prime }+(x^{\prime \prime }+x^{\prime \prime \prime })$,
consider $n=\max \{n_{x^{\prime }},n_{x^{\prime \prime }},n_{x^{\prime
\prime \prime }}\}\geq n_{x^{\prime \prime }+x^{\prime \prime
}},n_{x^{\prime \prime }+x^{\prime \prime \prime })}$.) Further, given an $%
x=[x_{n}]=[x_{n(x)}]\in X$ and a $\lambda \in F$, then $\lambda x_{n}\in
X_{n}$, $\lambda x_{n(x)}\in X_{n(x)}$ and $[\lambda x_{n}]=[\lambda
x_{n(x)}]$. It allows us to define

$\cdot $ $:X\times F\rightarrow X$, $\cdot (x,\lambda )\equiv \lambda
x=[\lambda x_{n(x)}]$.

\noindent One straightforwardly verifies that $X$ with so defined operations
\textquotedblleft $+$\textquotedblright\ and \textquotedblleft $\cdot $%
\textquotedblright\ is a vectorial space over $F$. (Notice that, $n_{\lambda
x}\leq n_{x}$; in order to verify that $\lambda (x^{\prime }+x^{\prime
\prime })=\lambda x^{\prime }+\lambda x^{\prime \prime }$, consider $n=\max
\{n_{x^{\prime }},n_{x^{\prime \prime }}\}\geq n_{x^{\prime \prime
}+x^{\prime \prime }},n_{\lambda x^{\prime }},n_{\lambda x^{\prime \prime }}$%
, while for $\mu (\lambda x)=(\mu \lambda )x$, consider $n=n_{x}\geq
n_{\lambda x},n_{(\mu \lambda )x}$.) Finally, let us define

$\left\Vert \cdot \right\Vert :X\rightarrow \mathbb{R}$, $\left\Vert
x\right\Vert =\left\Vert x_{n(x)}\right\Vert _{n(x)}$,

\noindent where $x=[x_{n(x)}]$ and $x_{n(x)}\in X_{n(x)}$ is the grain of $x$%
. Then, clearly, $\left\Vert x\right\Vert =\left\Vert x_{n(x)}\right\Vert
_{n(x)}=\left\Vert x_{n}\right\Vert $, $n\geq n(x)$. (The function $%
\left\Vert \cdot \right\Vert $ uniquely extends the sequence $(\left\Vert
\cdot \right\Vert _{n})$ of all the norms $\left\Vert \cdot \right\Vert _{n}$
on $X_{n}$ to $X$.) Again, one readily verifies that $\left\Vert \cdot
\right\Vert $ is a well defined norm on $X$. For instance, given any $%
x^{\prime },x^{\prime \prime }\in X$, then (since all $i_{nn^{\prime }}$ are
isometries)

$\left\Vert x^{\prime }+x^{\prime \prime }\right\Vert =\left\Vert
i_{n_{1}n_{2}}(y_{n_{1}}^{\prime })+y_{n_{2}}^{\prime \prime }\right\Vert
_{n_{2}}\leq \left\Vert i_{n_{1}n_{2}}(y_{n_{1}}^{\prime })\right\Vert
_{n_{2}}+\left\Vert y_{n_{2}}^{\prime \prime }\right\Vert _{n_{2}}=$

$=\left\Vert y_{n_{1}}^{\prime }\right\Vert _{n_{1}}+\left\Vert
y_{n_{2}}^{\prime \prime }\right\Vert _{n_{2}}=\left\Vert x_{n(x^{\prime
})}^{\prime }\right\Vert _{n(x^{\prime })}+\left\Vert x_{n(x^{\prime \prime
})}^{\prime \prime }\right\Vert _{n(x^{\prime \prime })}=\left\Vert
x^{\prime }\right\Vert +\left\Vert x^{\prime \prime }\right\Vert $,

\noindent that proves the triangle inequality. Thus, $X\equiv (X,\left\Vert
\cdot \right\Vert )$ is a normed space over $F$. Let us now define, for
every $n\in \mathbb{N}$,

$i_{n}:X_{n}\rightarrow X$, $i_{n}(x_{n})=x=[x_{n}]$.

\noindent Then each $i_{n}$ is linear and, by definition of $\sim $, for
every related pair $n\leq n^{\prime }$, $i_{nn^{\prime }}i_{n^{\prime
}}=i_{n}$ holds. Further, $i_{n}$ is an isometry (and hence, continuous)
because

$\left\Vert i_{n}(x_{n})\right\Vert =\left\Vert x\right\Vert =\left\Vert
x_{n(x)}\right\Vert _{n(x)}=\left\Vert x_{n}\right\Vert _{n}$.

\noindent We have to prove the universal property of $(X,i_{n})$ and $%
\boldsymbol{X}$ with respect to $i\mathcal{N}_{F}$, $(\mathcal{N}_{F})_{1}$
and $(\mathcal{N}_{F})^{1}$. Let, for every $n\in \mathbb{N}$, an isometry $%
f_{n}:X_{n}\rightarrow Y$ (a morphism $f_{n}:X_{n}\rightarrow Y$ of $(%
\mathcal{N}_{F})_{1}$; of $(\mathcal{N}_{F})^{1}$) be given such that

$f_{n^{\prime }}i_{nn^{\prime }}=f_{n}$, $n\leq n^{\prime }$,

\noindent holds. Put

$f:X\rightarrow Y$, $f(x)=f_{n(x)}(x_{n(x)})$,

\noindent where $x_{n(x)}\in X_{n(x)}$ is the grain of $x$. .Then, for every 
$n\geq n(x)$, $f(x)=f_{n}(x_{n})$. Clearly, the function $f$ is well defined
and linear, and, for every $n\in \mathbb{N}$, $fi_{n}=f_{n}$ holds. Further,
for every $x\in X$,

$\left\Vert f(x)\right\Vert _{Y}=\left\Vert f_{n(x)}(x_{n(x)}))\right\Vert
_{Y}=\left\Vert x_{n(x)}\right\Vert _{n(x)}=\left\Vert x\right\Vert $

($\left\Vert f(x)\right\Vert _{Y}=\left\Vert f_{n(x)}(x_{n(x)}))\right\Vert
_{Y}\leq \left\Vert x_{n(x)}\right\Vert _{n(x)}=\left\Vert x\right\Vert $;

$\left\Vert f(x)\right\Vert _{Y}=\left\Vert f_{n(x)}(x_{n(x)}))\right\Vert
_{Y}\leq \left\Vert f_{n(x)}\right\Vert \cdot \left\Vert x_{n(x)}\right\Vert
_{n(x)}\leq \left\Vert x_{n(x)}\right\Vert _{n(x)}=\left\Vert x\right\Vert $)

\noindent implying that $f$ is an isometry (a morphism of $(\mathcal{N}%
_{F})_{1}$, of $(\mathcal{N}_{F})_{1}\subseteq (\mathcal{N}_{F})^{1}$).
Further, assume that $f^{\prime }:X\rightarrow Y$ is any morphism of $%
\mathcal{N}_{F}$ such that, for every $n$, $f^{\prime }i_{n}=f_{n}$. Then,
for every $x\in X$,

$f^{\prime }(x)=f^{\prime }(i_{n(x)}(x_{n(x)}))=f_{n(x)}(x_{n(x)})=f(x)$,

\noindent implying that $f^{\prime }=f$. Therefore, $(X,i_{n})=%
\underrightarrow{\lim }\boldsymbol{X}$ in $i\mathcal{N}_{F}$, in $(\mathcal{N%
}_{F})_{1}$ and in $(\mathcal{N}_{F})^{1}$) (up to isomorphisms of the
category $i\mathcal{N}_{F}$). The constructed direct limit $(X,i_{n}$) of $%
\boldsymbol{X}$ is said to be the \emph{canonical} one. In order to extend
this $\underrightarrow{\lim }$ to a functor, let firstly an

$\boldsymbol{f}=[(\phi ,f_{n})]\in (\func{seq}$-$i\mathcal{N}_{F})_{1}(%
\boldsymbol{X},\boldsymbol{X}^{\prime })$

\noindent be given. We may assume that $(\phi ,f_{n}):\boldsymbol{X}%
\rightarrow \boldsymbol{X}^{\prime }$ is a special representative of $%
\boldsymbol{f}$ (the dual of [11], Lemma I. 1. 2)$,$ i.e., that $\phi $ is
increasing and

$f_{n^{\prime }}i_{nn^{\prime }}=i_{\phi (n)\phi (n^{\prime })}^{\prime
}f_{n}$, $n\leq n^{\prime }$. .

\noindent Let $(X,i_{n})=\underrightarrow{\lim }\boldsymbol{X}$ and $%
(X^{\prime },i_{n}^{\prime })=\underrightarrow{\lim }\boldsymbol{X}^{\prime
} $ be the canonical limits. We define

$f:X\rightarrow X^{\prime }$, $f(x)=i_{\phi (n(x))}^{\prime
}f_{n(x)}(x_{n(x)})$

\noindent (equivalently, $f(x)=i_{\phi (n)}^{\prime }f_{n}(x_{n})$, $%
x=[x_{n}]$).

\noindent Then $f$ is a well defined linear function satisfying

$fi_{n}=i_{\phi (n)}^{\prime }f_{n}$, $n\in \mathbb{N}$.

\noindent Further, since all the $i_{n}^{\prime }$ are isometries, and for
all $n$ and all $x_{n}\in X_{n}$, $\left\Vert f_{n}(x_{n})\right\Vert
_{n}^{\prime }\leq \left\Vert x_{n}\right\Vert _{n}$ holds, it follows that,
for every $x=[x_{n}]\in X$,

$\left\Vert f(x)\right\Vert ^{\prime }=\left\Vert i_{\phi (n)}^{\prime
}f_{n}(x_{n}))\right\Vert ^{\prime }=\left\Vert f_{n}(x_{n}))\right\Vert
_{\phi (n)}^{\prime }\leq \left\Vert x_{n}\right\Vert _{n}=\left\Vert
x\right\Vert $.

\noindent Hence, $f\in (\mathcal{N}_{F})_{1}(X,X^{\prime })$. Let now an $%
\boldsymbol{f}=[(\phi ,f_{n})]\in (\func{seq}$-$i\mathcal{N}_{F})^{1}(%
\boldsymbol{X},\boldsymbol{X}^{\prime })$ be given. By assuming that $(\phi
,f_{n})$ is a special representative as before with $\left\Vert
f_{n}\right\Vert \leq 1$ for all $n$, it follows that

$\left\Vert f(x)\right\Vert ^{\prime }=\left\Vert i_{\phi (n)}^{\prime
}f_{n}(x_{n}))\right\Vert ^{\prime }=\left\Vert f_{n}(x_{n})\right\Vert
_{\phi (n)}^{\prime }\leq \left\Vert f_{n}\right\Vert \cdot \left\Vert
x_{n}\right\Vert _{n}\leq \left\Vert x_{n}\right\Vert _{n}=\left\Vert
x\right\Vert $,

\noindent implying also that $f$ belongs to $(\mathcal{N}_{F})_{1}\subseteq (%
\mathcal{N}_{F})^{1}$. Now, by putting $\underrightarrow{\lim }\boldsymbol{f}%
=f$, one straightforwardly shows that

$\underrightarrow{\lim }:(\func{seq}$-$i\mathcal{N}_{F})_{1}\rightarrow (i%
\mathcal{N}_{F})_{1}$ \quad and

$\underrightarrow{\lim }:(\func{seq}$-$i\mathcal{N}_{F})^{1}\rightarrow (i%
\mathcal{N}_{F})^{1}$

\noindent are functors, i.e., that $\underrightarrow{\lim }\boldsymbol{1}_{%
\boldsymbol{X}}=1_{X}=1_{\underrightarrow{\lim }\boldsymbol{X}}$ and $%
\underrightarrow{\lim }(\boldsymbol{gf})=(\underrightarrow{\lim }\boldsymbol{%
g})(\underrightarrow{\lim }\boldsymbol{f})$ hold true. Since we have already
proven, by the very construction, that $\underrightarrow{\lim }\boldsymbol{X}%
\equiv (X,i_{n})$ belongs to $i\mathcal{N}_{F}$, it remains to verify the
last statement. Let an

$\boldsymbol{f}=[(\phi ,f_{n})]\in (\func{seq}$-$i\mathcal{N}_{F})(%
\boldsymbol{X},\boldsymbol{X}^{\prime }))$

\noindent be given such that all

$f_{n}:X_{n}\rightarrow X_{\phi (n)}^{\prime }$

are isometries. Then, for every $x=[x_{n}]\in X$,

$\left\Vert f(x)\right\Vert ^{\prime }=\left\Vert i_{\phi (n)}^{\prime
}f_{n}(x_{n})\right\Vert ^{\prime }=\left\Vert f_{n}(x_{n})\right\Vert
_{\phi (n)}^{\prime }=\left\Vert x_{n}\right\Vert _{n}=\left\Vert
x\right\Vert $.

\noindent Therefore, $f$ is an isometry. Finally, if all $f_{n}$ are
isometric isomorphisms, then $\boldsymbol{f}$ belongs to $(\func{seq}$-$i%
\mathcal{N}_{F})_{1}$, implying that $f\equiv \underrightarrow{\lim }%
\boldsymbol{f}$ is an isomorphism of $(\mathcal{N}_{F})_{1}$. Therefore, $f$
is an isometric isomorphism, and the proof of the lemma is finished.
\end{proof}

We shall need the following additional facts (related to Lemma 1) in our
forthcoming considerations.

\begin{lemma}
\label{L2}Let $\boldsymbol{X}=(X_{n},i_{nn^{\prime }},\mathbb{N})$ be a
direct sequence in $i\mathcal{N}_{F}$ such that, for every $n$, the bonding
morphism $i_{nn+1}$ is a section of $(\mathcal{N}_{F})_{1}$. Then every
limit morphism $i_{n}:X_{n}\rightarrow \underrightarrow{\lim }\boldsymbol{X}$
is a section of $(\mathcal{N}_{F})_{1}$. $If$, in addition, $X_{n}=X$ for
all n$,$ the $X$ is dominated by $\underrightarrow{\lim }\boldsymbol{X}$ in $%
(\mathcal{N}_{F})_{1}$.
\end{lemma}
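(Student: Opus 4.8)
The plan is to prove the first assertion, that each limit morphism $i_{k}$ is a section of $(\mathcal{N}_{F})_{1}$, and then to read off the second assertion as an immediate corollary. For each $n$ fix a contractive retraction $r_{n}:X_{n+1}\rightarrow X_{n}$ witnessing that the bonding morphism $i_{n\,n+1}$ is a section of $(\mathcal{N}_{F})_{1}$, so that $r_{n}i_{n\,n+1}=1_{X_{n}}$ with $\left\Vert r_{n}\right\Vert \leq 1$. For $m>k$ set $r_{m,k}=r_{k}r_{k+1}\cdots r_{m-1}:X_{m}\rightarrow X_{k}$ and $r_{k,k}=1_{X_{k}}$; being a finite composite of contractions, every $r_{m,k}$ lies in $(\mathcal{N}_{F})_{1}$.

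Fix $k\in\mathbb{N}$. To exhibit $i_{k}$ as a section I would build a contractive retraction $\rho_{k}:\underrightarrow{\lim}\boldsymbol{X}\rightarrow X_{k}$ through the universal property established in Lemma 1. For this, define a cocone $g_{m}:X_{m}\rightarrow X_{k}$ over $\boldsymbol{X}$ with vertex $X_{k}$ by $g_{m}=i_{mk}$ for $m\leq k$ and $g_{m}=r_{m,k}$ for $m\geq k$; the two prescriptions agree at $m=k$, both giving $1_{X_{k}}$, and every $g_{m}$ is contractive.

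The heart of the argument, and the step I expect to be the only genuine obstacle, is the verification of the cocone identity $g_{m^{\prime}}i_{mm^{\prime}}=g_{m}$ for $m\leq m^{\prime}$, which I would organise into three cases according to the position of $k$. When $m\leq m^{\prime}\leq k$ it is mere transitivity of the bonding maps, $i_{m^{\prime}k}i_{mm^{\prime}}=i_{mk}$. When $k\leq m\leq m^{\prime}$ the required $r_{m^{\prime},k}i_{mm^{\prime}}=r_{m,k}$ is proved by induction on $m^{\prime}$, the step resting on the factorisation $i_{m,m^{\prime}+1}=i_{m^{\prime}\,m^{\prime}+1}i_{mm^{\prime}}$ and on the retraction relation $r_{m^{\prime}}i_{m^{\prime}\,m^{\prime}+1}=1_{X_{m^{\prime}}}$, which absorbs the outermost bonding factor. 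The mixed case $m\leq k\leq m^{\prime}$ then reduces to the second one: factoring $i_{mm^{\prime}}=i_{km^{\prime}}i_{mk}$ and using the instance $r_{m^{\prime},k}i_{km^{\prime}}=r_{k,k}=1_{X_{k}}$ leaves $i_{mk}=g_{m}$, as needed.

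With compatibility established, the universal property of $(\underrightarrow{\lim}\boldsymbol{X},i_{n})$ for the category $(\mathcal{N}_{F})_{1}$ produces a unique contractive $\rho_{k}:\underrightarrow{\lim}\boldsymbol{X}\rightarrow X_{k}$ with $\rho_{k}i_{m}=g_{m}$ for all $m$; in particular $\rho_{k}i_{k}=g_{k}=1_{X_{k}}$, so $i_{k}$ is a section of $(\mathcal{N}_{F})_{1}$. For the final statement, suppose $X_{n}=X$ for all $n$ and take $k=1$: then $i_{1}:X\rightarrow\underrightarrow{\lim}\boldsymbol{X}$ together with the contractive retraction $\rho_{1}$ satisfying $\rho_{1}i_{1}=1_{X}$ exhibits $X$ as a retract of $\underrightarrow{\lim}\boldsymbol{X}$, that is, $X$ is dominated by $\underrightarrow{\lim}\boldsymbol{X}$ in $(\mathcal{N}_{F})_{1}$.
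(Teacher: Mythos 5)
Your proof is correct and follows essentially the same route as the paper: fix $k$, assemble the bonding morphisms below $k$ and the composed contractive retractions above $k$ into a compatible family, and invoke the universal property of $\underrightarrow{\lim }\boldsymbol{X}$ from Lemma 1 in $(\mathcal{N}_{F})_{1}$ to obtain a contractive retraction $\rho _{k}$ with $\rho _{k}i_{k}=1_{X_{k}}$. The only difference is that you spell out the three-case verification of the cocone identity, which the paper states without detail.
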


\begin{proof}
Given such an $\boldsymbol{X}=(X_{n},i_{nn^{\prime }},\mathbb{N})$ in $i%
\mathcal{N}_{F}$, every $i_{nn+1}$ admits a retractions

$r_{n.n+1}:X_{n+1}\rightarrow X_{n}$

\noindent of $(\mathcal{N}_{F})_{1}$, i.e., $r_{nn+1}i_{nn+1}=1_{X}$ and $%
\left\Vert r_{nn+1}(x_{n+1})\right\Vert \leq \left\Vert x_{n+1}\right\Vert $
(having $\left\Vert r_{n+1}\right\Vert $ $=1$ whenever $X_{n}\neq \{\theta
\} $). Denote, for every related pair $n\leq n^{\prime }$,

$r_{nn^{\prime }}\equiv r_{nn+1}\cdots r_{n^{\prime }-1n^{\prime
}}:X_{n^{\prime }}\rightarrow X_{n}$ \quad ($r_{nn}=1_{X_{n}}$).

\noindent Let $n_{0}\in \mathbb{N}$ be chosen arbitrarily. Put

$r_{n}^{n_{0}}:X_{n}\rightarrow X_{n_{0}}$, $r_{n}^{n_{0}}=\left\{ 
\begin{array}{c}
i_{nn_{0}}\text{, }n\leq n_{0} \\ 
r_{nn_{0}}\text{, }n>n_{0}%
\end{array}%
\right. $.

\noindent Then, for every $n$,

$r_{n+1}^{n_{0}}i_{nn+1}=r_{n}^{n_{0}}$

\noindent holds. By Lemma 1, there exists (in $(\mathcal{N}_{F})_{1}$) an

$r_{n_{0}}:\underrightarrow{\lim }\boldsymbol{X}\rightarrow X_{n_{0}}$, $%
r_{n_{0}}i_{n}=r_{n}^{n_{0}}$, $n\in \mathbb{N}$.

\noindent Then especially, $r_{n_{0}}i_{n_{0}}=i_{n_{0}n_{0}}=1_{X_{n_{0}}}$%
, and the conclusion follows.
\end{proof}

\begin{lemma}
\label{L3}Let $\boldsymbol{X}=(X_{n}^{\prime },i_{nn^{\prime }}^{\prime },%
\mathbb{N})$ and $\boldsymbol{X}^{\prime \prime }=(X_{n}^{\prime \prime
},i_{nn^{\prime }}^{\prime \prime },\mathbb{N})$ be direct sequences in $i%
\mathcal{N}_{F}$ and let $\boldsymbol{f}=[(\phi ,f_{n})]\in (\func{seq}$-$i%
\mathcal{N}_{F})^{1}(\boldsymbol{X}^{\prime },\boldsymbol{X}^{\prime \prime
})$ such that all $f_{n}$ are isomorphisms of $\mathcal{N}_{F}$. Then $%
\underrightarrow{\lim }\boldsymbol{f}$ is an isomorphism of $\mathcal{N}_{F}$
and there exists $\underrightarrow{\lim }(\boldsymbol{f}^{-1})$ such that

$\underrightarrow{\lim }(\boldsymbol{f}^{-1})=($ $\underrightarrow{\lim }%
\boldsymbol{f})^{-1}$.
\end{lemma}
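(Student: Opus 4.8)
The plan is to invert $\boldsymbol{f}$ \emph{level by level} and then transport the resulting identities through the limit functor of Lemma \ref{L1}. First I would pass, exactly as in the proof of Lemma \ref{L1}, to a special representative $(\phi ,f_{n})$ (the dual of [11], Lemma I.1.2), so that $\phi $ is increasing and $f_{n^{\prime }}i_{nn^{\prime }}^{\prime }=i_{\phi (n)\phi (n^{\prime })}^{\prime \prime }f_{n}$ for all $n\leq n^{\prime }$. Since each $f_{n}:X_{n}^{\prime }\rightarrow X_{\phi (n)}^{\prime \prime }$ is an isomorphism of $\mathcal{N}_{F}$, the linear inverse $f_{n}^{-1}:X_{\phi (n)}^{\prime \prime }\rightarrow X_{n}^{\prime }$ exists as a morphism of $\mathcal{N}_{F}$, and the aim is to assemble the class $\{f_{n}^{-1}\}$ into an in-morphism $\boldsymbol{f}^{-1}:\boldsymbol{X}^{\prime \prime }\rightarrow \boldsymbol{X}^{\prime }$.

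To see that the $f_{n}^{-1}$ do form a system morphism, I would invert the compatibility relation: right-multiplying $f_{n^{\prime }}i_{nn^{\prime }}^{\prime }=i_{\phi (n)\phi (n^{\prime })}^{\prime \prime }f_{n}$ by $f_{n}^{-1}$ and left-multiplying by $f_{n^{\prime }}^{-1}$ gives $i_{nn^{\prime }}^{\prime }f_{n}^{-1}=f_{n^{\prime }}^{-1}i_{\phi (n)\phi (n^{\prime })}^{\prime \prime }$, which is exactly the naturality required of a representative $(\psi ,f_{n}^{-1})$ after the reindexing $m\mapsto \psi (m)=\min \{n\mid \phi (n)\geq m\}$ (the bonding isometries of $\boldsymbol{X}^{\prime \prime }$ absorbing the gaps of $\phi $). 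Level-wise $f_{n}^{-1}f_{n}=1_{X_{n}^{\prime }}$ and $f_{n}f_{n}^{-1}=1_{X_{\phi (n)}^{\prime \prime }}$, so $\boldsymbol{f}^{-1}\boldsymbol{f}=\boldsymbol{1}_{\boldsymbol{X}^{\prime }}$ and $\boldsymbol{f}\boldsymbol{f}^{-1}=\boldsymbol{1}_{\boldsymbol{X}^{\prime \prime }}$ in the in-category. Applying $\underrightarrow{\lim }$ and using its functoriality (Lemma \ref{L1}) then formally yields $\underrightarrow{\lim }(\boldsymbol{f}^{-1})\,\underrightarrow{\lim }\boldsymbol{f}=1_{\underrightarrow{\lim }\boldsymbol{X}^{\prime }}$ and the symmetric identity, which at once exhibits $f\equiv \underrightarrow{\lim }\boldsymbol{f}$ as a linear bijection and identifies its inverse with $\underrightarrow{\lim }(\boldsymbol{f}^{-1})$. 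Thus the \emph{underlying linear} inverse of $f$ is automatic, being forced by the in-category identity.

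The step I expect to be the main obstacle is the very last one: $\boldsymbol{f}^{-1}$ need not lie in $(\func{seq}$-$i\mathcal{N}_{F})^{1}$, so Lemma \ref{L1} does not literally apply to it. Indeed, while $\left\Vert f_{n}\right\Vert \leq 1$, one has $\left\Vert f_{n}^{-1}\right\Vert \geq 1$ in general (with equality only in the isometric case), so $(\psi ,f_{n}^{-1})$ falls outside the norm-controlled domain on which $\underrightarrow{\lim }$ was constructed. Writing $g\equiv \underrightarrow{\lim }(\boldsymbol{f}^{-1})$, the limit formula gives $\left\Vert g(y)\right\Vert =\left\Vert f_{n}^{-1}(y_{n})\right\Vert $, so the \emph{continuity} of $g$ (equivalently, boundedness of $f^{-1}$, so that $f$ is a genuine isomorphism of $\mathcal{N}_{F}$) reduces to the uniform estimate $\sup_{n}\left\Vert f_{n}^{-1}\right\Vert <\infty $. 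The compatibility relation forces $(\left\Vert f_{n}^{-1}\right\Vert )$ to be non-decreasing, so the real work is to bound this sequence; when the $f_{n}$ are isometric the bound is trivial and the conclusion already drops out of Lemma \ref{L1}, whereas in the general topological-isomorphism case this uniform bound is precisely the delicate hypothesis one must extract (or impose) in order to place $\boldsymbol{f}^{-1}$ back inside the reach of the limit functor.
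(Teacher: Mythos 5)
Your route is essentially the paper's: pass to a special representative, invert level-wise, check the inverted compatibility relations, and identify the inverse of $f\equiv \underrightarrow{\lim }\boldsymbol{f}$ with the limit of the level inverses. One structural difference matters. You obtain bijectivity of $f$ by applying functoriality of $\underrightarrow{\lim }$ to $\boldsymbol{f}^{-1}\boldsymbol{f}=\boldsymbol{1}$ and $\boldsymbol{f}\boldsymbol{f}^{-1}=\boldsymbol{1}$; but, as you yourself note, $\boldsymbol{f}^{-1}$ need not lie in $(\mathrm{seq}\textrm{-}i\mathcal{N}_{F})^{1}$, so Lemma \ref{L1} provides no functor that can be applied to it, and this step is not available as written. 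The paper avoids the circle: it proves that $f$ is a monomorphism because all $i_{nn^{\prime }}^{\prime }$, $i_{nn^{\prime }}^{\prime \prime }$ and $f_{n}$ are monomorphisms, and proves surjectivity directly at the level of grains (given $x^{\prime \prime }$, choose $n$ with $\phi (n)\geq n(x^{\prime \prime })$ and lift $x_{\phi (n)}^{\prime \prime }$ through the surjective $f_{n}$); only afterwards does it verify $f^{-1}i_{\phi (n)}^{\prime \prime }=i_{n}^{\prime }f_{n}^{-1}$ and read off $\underrightarrow{\lim }(\boldsymbol{f}^{-1})=f^{-1}$, the limit of $\boldsymbol{f}^{-1}$ being understood via the explicit grain-wise formula rather than through the norm-controlled limit functor. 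Your bijectivity argument is repairable by the same direct computation, so this defect is minor.

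The more important point is the obstacle at which your proposal stops: continuity of $f^{-1}$, i.e. the uniform bound $\sup_{n}\left\Vert f_{n}^{-1}\right\Vert <\infty $. You should know that the paper does not resolve this either: its proof concludes that $f$ is ``an epimorphism, and consequently, an isomorphism'', which only yields a bijective morphism, and the subsequent Caution concedes merely that $f^{-1}$ may fail to lie in $(\mathcal{N}_{F})^{1}$, saying nothing about boundedness. Your suspicion that an additional hypothesis is genuinely needed is correct. Take $X_{n}^{\prime }=X_{n}^{\prime \prime }=\mathrm{span}\{e_{1},\dots ,e_{n}\}\subseteq \ell _{2}$ with the inclusions as bonding isometries, $\phi =\mathrm{id}$, and $f_{n}(e_{k})=e_{k}/k$: every $f_{n}$ is an isomorphism of $\mathcal{N}_{F}$ with $\left\Vert f_{n}\right\Vert =1$, the compatibility relations hold, yet $\underrightarrow{\lim }\boldsymbol{f}$ is the continuous bijection $e_{k}\mapsto e_{k}/k$ of the space of finitely supported sequences onto itself, whose inverse is unbounded. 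So under the stated hypotheses one can only conclude that $f$ is a bijective morphism whose set-theoretic inverse is the grain-wise limit of the $f_{n}^{-1}$; to obtain an isomorphism in the categorical sense one must either impose the uniform bound you identified (automatic in the isometric case) or read ``isomorphism'' in the weaker, purely algebraic sense. In short, your proposal is incomplete as a proof of the lemma as stated, but the missing step is equally missing from the paper's own proof, and your diagnosis of where the difficulty sits is accurate.
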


\begin{proof}
We may assume, without loss of generality, that $(\phi ,f_{n})$ is a special
representative of $\boldsymbol{f}$ with $\left\Vert f_{n}\right\Vert \leq 1$
for all $n$. By Lemma 1, there exists

$\underrightarrow{\lim }\boldsymbol{f}\equiv f:\underrightarrow{\lim }%
\boldsymbol{X}^{\prime }\rightarrow \underrightarrow{\lim }\boldsymbol{X}%
^{\prime \prime }$,

\noindent and $f$ belongs to $(\mathcal{N}_{F})_{1}$. Further, since, for
every $n$, $fi_{n}^{\prime }=i_{n}^{\prime \prime }f_{n}$ and $i_{n}^{\prime
}$, $i_{n}^{\prime \prime }$ are the isometries, it readily follows that $%
\left\Vert f_{n}\right\Vert \leq \left\Vert f\right\Vert \leq 1$. We are to
prove that $f$ is an isomorphism of $\mathcal{N}_{F}$. Since all $%
i_{nn^{\prime }}^{\prime }$, $i_{nn^{\prime }}^{\prime \prime }$ and $f_{n}$
are monomorphisms, the construction of the canonical limit implies that $%
\underrightarrow{\lim }\boldsymbol{f}$ is an monomorphism. Let $x^{\prime
\prime }\in X^{\prime \prime }=\underrightarrow{\lim }\boldsymbol{X}^{\prime
\prime }$. Then there exists a unique $x_{n(x^{\prime \prime })}^{\prime
\prime }\in X_{n(x^{\prime \prime })}^{\prime \prime }$ such that $x^{\prime
\prime }=[x_{n(x^{\prime \prime })}^{\prime \prime }]=[x_{n}^{\prime \prime
}]$, $n\geq n(x^{\prime \prime })$. Choose an $n\in \mathbb{N}$ such that $%
\phi (n)\geq n(x^{\prime \prime })$. Since $f_{n}$ is an epimorphism, there
exists an $x_{n}^{\prime }\in X_{n}^{\prime }$ such that $%
f_{n}(x_{n}^{\prime })=x_{\phi (n)}^{\prime \prime }$. Now, there exists a
unique $x^{\prime }=[x_{n}^{\prime }]=i_{n}^{\prime }(x_{n}^{\prime })\in
X^{\prime }$, and it follows, by the very definition of $\underrightarrow{%
\lim }\boldsymbol{f}$, that $f(x^{\prime })=x^{\prime \prime }$. Hence, $f$
is an epimorphism. and consequently, an isomorphism. (If, especially, $%
\left\Vert f_{n}\right\Vert =1$, $n\geq n_{0}$, then $\left\Vert
f\right\Vert =1$.) Let $f^{-1}:X^{\prime \prime }\rightarrow X^{\prime }$ be
the inverse of $f$. Notice that the sequence $(f_{n}^{-1})$ induces the
in-morphism

$\boldsymbol{f}^{-1}=[(\psi ,f_{n}^{-1}i_{n\phi (n)}^{\prime \prime })]:%
\boldsymbol{X}^{\prime \prime }\rightarrow \boldsymbol{X}^{\prime }$.

\noindent Let $f^{-1}:X^{\prime \prime }\rightarrow X^{\prime }$ be the
inverse of $f$. (Caution: In general, $f^{-1}$ does \emph{not} belong to $(%
\mathcal{N}_{F})^{1}\supseteq (\mathcal{N}_{F})_{1}$!) One readily verifies
(by our construction of the direct limit) that, for every $n$,

$f^{-1}i_{\phi (n)}^{\prime \prime }=i_{n}^{\prime }f_{n}^{-1}$

\noindent holds true. Hence, $\underrightarrow{\lim }(\boldsymbol{f}%
^{-1})=f^{-1}$. (Notice that $i_{\phi (n)\phi (n+1)}^{\prime \prime
}f_{n}=f_{n+1}i_{nn+1}^{\prime }$ implies that the sequence $(\left\Vert
f_{n}\right\Vert )$ in $[\left\Vert f_{1}\right\Vert ,1]\subseteq \mathbb{R}$
is increasing and bounded, and, further, that every \textquotedblleft
restriction $f|_{X_{n}^{\prime \prime }}^{X_{n}^{\prime }}$%
\textquotedblright\ carries the norm of $f_{n}$. Therefore, one may say that 
$\left\Vert \underrightarrow{\lim }\boldsymbol{f}\right\Vert =\left\Vert
f\right\Vert =\lim (\left\Vert f_{n}\right\Vert )$!)
\end{proof}

Further, we show that the functors $D^{2k}$ preserve the direct limits of
direct sequences in $i\mathcal{N}_{F}$.

\begin{lemma}
\label{L4}For each $k\in \{0\}\cup \mathbb{N}$, there exist the direct limit
functors

$\underrightarrow{\lim }:(\func{seq}$-$D^{2k}[i\mathcal{N}%
_{F}])_{1}\rightarrow D^{2k}[(\mathcal{N}_{F})_{1}]$ \quad and

$\underrightarrow{\lim }:(\func{seq}$-$D^{2k}[i\mathcal{N}%
_{F}])^{1}\rightarrow D^{2k}[(\mathcal{N}_{F})^{1}]$

\noindent such that, for every $\boldsymbol{X}=(X_{n},i_{nn^{\prime }},%
\mathbb{N})\in Ob(\func{seq}$-$i\mathcal{N}_{F})$,

$\underrightarrow{\lim }D^{2k}[\boldsymbol{X}]\equiv (X^{\prime
},i_{n}^{\prime })\cong (D^{2k}(X),D^{2k}(i_{n}))$

\noindent belongs to $D^{2k}[i\mathcal{N}_{F}]$. Furthermore, for every $%
\boldsymbol{f}\in (\func{seq}$-$D^{2k}[i\mathcal{N}_{F}])(\boldsymbol{X},%
\boldsymbol{X}^{\prime }))$, if $\boldsymbol{f}$ admits a representative $%
(\phi ,f_{n})$ such that, for every $n\in \mathbb{N}$, $f_{n}$ is an
isometry (isometric isomorphism), then

$\underrightarrow{\lim }\boldsymbol{f}:\underrightarrow{\lim }\boldsymbol{X}%
\rightarrow \underrightarrow{\lim }\boldsymbol{X}^{\prime }$

\noindent is an isometry (isometric isomorphism).
\end{lemma}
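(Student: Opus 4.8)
The plan is to derive the whole statement from Lemma 1 by exploiting that $D^{2k}$ is a faithful, norm-preserving covariant functor which carries isometries to isometries. Throughout, let $(X,i_{n})=\underrightarrow{\lim}\boldsymbol{X}$ be the canonical limit of Lemma 1. First I would record the two standard duality facts needed: an isometric embedding $\iota$ has for $D(\iota)$ a metric (quotient) surjection (Hahn--Banach), and a metric surjection $q$ has for $D(q)$ an isometric embedding (via $\|D(q)(y^{1})\|=\sup_{\|x\|\le 1}|y^{1}(q(x))|=\|y^{1}\|$). Iterating these $k$ times shows that $D^{2k}$ sends each isometry to an isometry, so $D^{2k}[\boldsymbol{X}]=(D^{2k}(X_{n}),D^{2k}(i_{nn^{\prime }}),\mathbb{N})$ is again a direct sequence in $i\mathcal{N}_{F}$ and, $i_{n}$ being an isometry, $(D^{2k}(X),D^{2k}(i_{n}))$ is a cocone over it. With the composite natural transformation $\eta\equiv j^{2k-2}\cdots j^{0}:1_{\mathcal{N}_{F}}\rightsquigarrow D^{2k}$ of the Preliminaries, whose components $\eta_{Z}:Z\rightarrow D^{2k}(Z)$ are isometric embeddings, the naturality identity $\|h(z)\|=\|\eta_{W}(h(z))\|=\|D^{2k}(h)(\eta_{Z}(z))\|=\|\eta_{Z}(z)\|=\|z\|$, valid for any $h:Z\rightarrow W$ with $D^{2k}(h)$ an isometric embedding, shows conversely that $D^{2k}$ also \emph{reflects} isometric embeddings, a fact I will need for the final assertion.

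The core step is to identify the direct limit of $D^{2k}[\boldsymbol{X}]$, \emph{taken inside the image category}, with $(D^{2k}(X),D^{2k}(i_{n}))$; here the universal property is transported across $D^{2k}$ by faithfulness. Functoriality already gives the cocone relations $D^{2k}(i_{n^{\prime }})D^{2k}(i_{nn^{\prime }})=D^{2k}(i_{n})$. For the universal property, let $(D^{2k}(Y),\gamma_{n})$ be any cocone over $D^{2k}[\boldsymbol{X}]$ in $D^{2k}[(\mathcal{N}_{F})_{1}]$ (resp. in $D^{2k}[(\mathcal{N}_{F})^{1}]$); each $\gamma_{n}=D^{2k}(g_{n})$ for a unique $g_{n}:X_{n}\rightarrow Y$ of $(\mathcal{N}_{F})_{1}$ (resp. $(\mathcal{N}_{F})^{1}$), and faithfulness turns $\gamma_{n^{\prime }}D^{2k}(i_{nn^{\prime }})=\gamma_{n}$ into $g_{n^{\prime }}i_{nn^{\prime }}=g_{n}$. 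Thus $(Y,g_{n})$ is a cocone over $\boldsymbol{X}$, and Lemma 1 supplies a unique $g:X\rightarrow Y$ in $(\mathcal{N}_{F})_{1}$ (resp. $(\mathcal{N}_{F})^{1}$) with $gi_{n}=g_{n}$; then $D^{2k}(g)$ mediates the given cocone and is, by faithfulness, the only such mediator. This exhibits $(D^{2k}(X),D^{2k}(i_{n}))$ as the direct limit in $D^{2k}[(\mathcal{N}_{F})_{1}]$ (resp. $D^{2k}[(\mathcal{N}_{F})^{1}]$), its object $D^{2k}(X)$ belonging to $D^{2k}[i\mathcal{N}_{F}]$.

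For functoriality of $\underrightarrow{\lim}$ I would descend morphisms in the same manner: a morphism $\boldsymbol{f}=[(\phi ,f_{n})]$ between $D^{2k}$-image sequences has $f_{n}=D^{2k}(h_{n})$ with $h_{n}$ isometries, and faithfulness promotes the commutativities defining $(\phi ,f_{n})$ to those defining $(\phi ,h_{n})$, so $\boldsymbol{h}=[(\phi ,h_{n})]$ is a morphism of the corresponding sequences in $i\mathcal{N}_{F}$ with $D^{2k}[\boldsymbol{h}]=\boldsymbol{f}$. Setting $\underrightarrow{\lim}\boldsymbol{f}:=D^{2k}(\underrightarrow{\lim}\boldsymbol{h})$, preservation of identities and composites follows from that of $D^{2k}$ together with the functoriality of Lemma 1. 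For the isometry assertion, if a representative $(\phi ,f_{n})$ has all $f_{n}$ isometries then, since $D^{2k}$ reflects isometric embeddings, the $h_{n}$ are isometries; Lemma 1 makes $\underrightarrow{\lim}\boldsymbol{h}$ an isometry, and the isometry-preserving $D^{2k}$ carries it to the isometry $\underrightarrow{\lim}\boldsymbol{f}=D^{2k}(\underrightarrow{\lim}\boldsymbol{h})$. If the $f_{n}$ are isometric isomorphisms, so are the $h_{n}$, whence Lemma 1 makes $\underrightarrow{\lim}\boldsymbol{h}$ an isometric isomorphism and $D^{2k}$, preserving norms and isomorphisms, makes $\underrightarrow{\lim}\boldsymbol{f}$ one as well.

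The delicate point is exactly this universal-property verification, because the limit must be read \emph{within} the image category $D^{2k}[(\mathcal{N}_{F})_{1}]$. Applying Lemma 1 naively to the Banach sequence $D^{2k}[\boldsymbol{X}]$ yields in general only a dense, often proper, subspace of $D^{2k}(X)$ --- for instance $X_{n}=\ell^{1}_{n}$ gives the canonical limit $c_{00}$ rather than $D^{2}(X)=(\ell^{\infty})^{\ast }$ --- so it is the faithfulness of $D^{2k}$, which restricts the admissible test cocones to $D^{2k}$-images, that does the real work and pins the limit down to $D^{2k}(X)$.
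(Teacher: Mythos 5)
Your argument is correct in substance, and its core step is the same as the paper's: both of you establish that $(D^{2k}(X),D^{2k}(i_{n}))$ has the universal property of the direct limit \emph{inside the image categories} by using faithfulness of $D^{2k}$ to pull a compatible cocone $D^{2k}(f_{n})$ back to a cocone $(f_{n})$ over $\boldsymbol{X}$, invoking Lemma 1, and pushing the unique mediating morphism forward again. The differences are worth noting. Where the paper simply cites [16], Lemma 1 (i), you re-derive the preservation of isometries from the Hahn--Banach duality facts, and you additionally prove that $D^{2k}$ \emph{reflects} isometric embeddings via the composite natural transformation $j^{2k-2}\cdots j^{0}$; the paper never needs reflection, because it defines $\underrightarrow{\lim }\boldsymbol{f}$ by repeating the construction of Lemma 1 directly on the image sequences (which are again sequences in $i\mathcal{N}_{F}$), so the isometry assertion for limit morphisms falls out of Lemma 1. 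Your alternative -- descend $\boldsymbol{f}$ to $\boldsymbol{h}$ by faithfulness and set $\underrightarrow{\lim }\boldsymbol{f}:=D^{2k}(\underrightarrow{\lim }\boldsymbol{h})$ -- has the advantage that the functor's values land literally on $D^{2k}(X)\rightarrow D^{2k}(X^{\prime })$, i.e.\ inside $D^{2k}[(\mathcal{N}_{F})_{1}]$, consistently with the object assignment. Your closing observation is in fact a correction of the paper's own wording: the paper asserts that the canonical Lemma-1 limit object $X^{\prime }=(\sqcup _{n}D^{2k}(X_{n}))/\sim $ is isomorphic to $D^{2k}(X)$, and your example with the finite-dimensional $l_{1}$-spaces (canonical limit of the biduals is $c_{00}$ with the $l_{1}$-norm, while $D^{2}(X)\cong D(l_{\infty })$) shows that this identification fails in general; the statement survives only when $\underrightarrow{\lim }D^{2k}[\boldsymbol{X}]$ is read as the limit taken in the image category, which is exactly how you read it. Two small blemishes: in your functoriality paragraph the terms $h_{n}$ of a general morphism are only contractive (norm $\leq 1$) maps, not isometries -- the isometry hypothesis belongs only to the final assertion -- and the canonical limit of $D^{2k}[\boldsymbol{X}]$ embeds isometrically into $D^{2k}(X)$ but need not be dense there (in your own example the closure of $c_{00}$ is $l_{1}$, a proper closed subspace of $D(l_{\infty })$), so the word \emph{dense} should be dropped.
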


\begin{proof}
Clearly, every direct sequence in $D^{2k}[i\mathcal{N}_{F}]$ is of the form $%
D^{2k}[\boldsymbol{X}]=(D^{2k}(X_{n}),D^{2k}(i_{nn^{\prime }}),\mathbb{N})$,
where $\boldsymbol{X}=(X_{n},i_{nn^{\prime }},\mathbb{N})$ is a direct
sequence in $i\mathcal{N}_{F}$. Since, by Lemma 1 (i) of [16], all $%
D^{2k}(i_{nn^{\prime }})$ are isometries, every such direct sequence $D^{2k}[%
\boldsymbol{X}]$ belongs to $i\mathcal{N}_{F}$ as well. By Lemma 1, the
direct limit

$\underrightarrow{\lim }D^{2k}[\boldsymbol{X}]=(X^{\prime },i_{n}^{\prime })$%
, $X^{\prime }=((\sqcup _{n\in \mathbb{N}}D^{2k}(X_{n}))/\sim ,\left\Vert
\cdot \right\Vert ^{\prime })$.

\noindent exists in $i\mathcal{N}_{F\text{. }}$ and has the universal
property with respect to $ii\mathcal{N}_{F}$, $(\mathcal{N}_{F})_{1}$ and $(%
\mathcal{N}_{F})^{1}$. We are to prove that $(D^{2k}(X),D^{2k}(i_{n}))$ is a
direct limit of $D^{2k}[\boldsymbol{X}]$ in $D^{2k}[i\mathcal{N}_{F}]$, in $%
D^{2k}[(\mathcal{N}_{F})_{1}]$ and in $D^{2k}[(\mathcal{N}_{F})^{1}]$
(implying that $X^{\prime }$ is isomorphic to $D^{2k}(X)$ in $i\mathcal{N}%
_{F}$, and hence, a Banach space). Firstly, since $j^{2k}:1_{\mathcal{N}%
_{F}}\rightsquigarrow D^{2k}$ is a natural transformation of the functors,
by applying $D^{2k}$ to $\boldsymbol{X}$ and $\underrightarrow{\lim }%
\boldsymbol{X}$, the following commutative diagram

\noindent $%
\begin{array}{cccccc}
X_{1} & \overset{i_{12}}{\rightarrow }\text{ }\cdots & \rightarrow X_{n} & 
\overset{i_{nn+1}}{\rightarrow } & X_{n+1}\hookrightarrow & \cdots \text{ }X
\\ 
\downarrow j_{X_{1}}^{2k} & \cdots & j_{X_{n}}^{2k}\downarrow &  & 
\downarrow j_{X_{n+1}}^{2k}\text{ }\cdots & j_{X}^{2k}\downarrow \\ 
D^{2k}(X_{1}) & \overset{D^{2k}(i_{12})}{\rightarrow }\text{ }\cdots & 
\rightarrow D^{2k}(X_{n}) & \overset{D^{2k}(i_{nn+1})}{\rightarrow } & 
D^{2k}(X_{n+1})\rightarrow & \cdots \text{ }D^{2k}(X)%
\end{array}%
$

\noindent in $i\mathcal{N}_{F}$ occurs, and also

$D^{2k}(i_{n^{\prime }})D^{2}(i_{nn^{\prime }})=D^{2}(i_{n})$, \quad $%
D^{2k}(i_{n})j_{X_{n}}^{2k}=j_{X}^{2k}i_{n}$,

\noindent whenever $n\leq n^{\prime }$. Secondly, we are verifying the
universal property of $(D^{2}(X),D^{2}(i_{n}))$ and $D^{2}[\boldsymbol{X}]$
with respect to the categories $D^{2k}[i\mathcal{N}_{F}$], $D^{2k}[(\mathcal{%
N}_{F})_{1}]$ and $D^{2k}[(\mathcal{N}_{F})^{1}]$. Let, for every $n\in 
\mathbb{N}$, a morphism $D^{2k}(f_{n}):D^{2k}X_{n}\rightarrow D^{2k}(Y)$ of $%
D^{2k}[i\mathcal{N}_{F}]$ (of $D^{2k}[(\mathcal{N}_{F})_{1}]$; of $D^{2k}[(%
\mathcal{N}_{F})^{1}]$) be given such that

$D^{2k}(f_{n^{\prime }})D^{2k}(i_{nn^{\prime }})=D^{2k}(f_{n})$, $n\leq
n^{\prime }$,

\noindent holds. Since each $D^{2k}$ is a faithful functor, it follows that $%
f_{n^{\prime }}i_{nn^{\prime }}=f_{n}$, $n\leq n^{\prime }$. By Lemma 1 (the
case $k=0$), there exists a unique $f:X\rightarrow Y$ of $i\mathcal{N}_{F}$
(of $(\mathcal{N}_{F})_{1}$; of $(\mathcal{N}_{F})^{1}$) such that, for
every $n\in \mathbb{N}$, $fi_{n}=f_{n}$. Then $D^{2k}(f):D^{2k}(X)%
\rightarrow D^{2k}(Y)$ belongs to $D^{2k}[i\mathcal{N}_{F}$] (to $D^{2k}[(%
\mathcal{N}_{F})_{1}]$; to $D^{2k}[(\mathcal{N}_{F})^{1}]$) and, for every $%
n $,

$D^{2k}(f)D^{2k}(i_{n})=D^{2k}f(_{n}$.).

\noindent If $D^{2}(f^{\prime }):D^{2}(X)\rightarrow D^{2}(Y)$ is any
morphism of $D^{2}[i\mathcal{N}_{F}$] (of $D^{2k}[(\mathcal{N}_{F})_{1}]$;
of $D^{2k}[(\mathcal{N}_{F})^{1}]$) such that $D^{2}(f^{\prime
})D^{2}(i_{n})=D^{2}(f_{n}$.)$,$ $n\in \mathbb{N}$, then

$D^{2}(f^{\prime }i_{n})=D^{2}(f_{n})=D^{2}(fi_{n})$ \quad implying

$f^{\prime }i_{n}=f_{n}=fi_{n}$, $n\in \mathbb{N}$.

\noindent Since $f$ is unique having that property, it follows that $%
f^{\prime }=f$, and thus, $D^{2k}(f^{\prime })=D^{2k}(f)$, implying the
uniqueness of $D^{2k}(f)$ in $D^{2k}[i\mathcal{N}_{F}]$ (in $D^{2k}[(%
\mathcal{N}_{F})_{1}]$, in $D^{2k}[(\mathcal{N}_{F})^{1}]$). Therefore, $%
(D^{2k}(X),D^{2k}(i_{n}))$ is a direct limit of $D^{2k}[\boldsymbol{X}]$ in $%
D^{2k}[i\mathcal{N}_{F}]$, in $D^{2k}[(\mathcal{N}_{F})_{1}]$ and in $%
D^{2k}[(\mathcal{N}_{F})^{1}]$. Consequently, by construction of the object
of the canonical direct limit of a direct sequence in $i\mathcal{N}_{F}$, it
follows that $X^{\prime }\cong D^{2k}(X)$ in $D^{2k}[i\mathcal{N}%
_{F}]\subseteq i\mathcal{B}_{F}$, and the statement for objects follows in
general. Concerning the morphisms, let firstly an

$\boldsymbol{f}=[(\phi ,f_{n})]\in (\func{seq}$-$D^{2k}[i\mathcal{N}%
_{F}])_{1}(D^{2}[\boldsymbol{X}],D^{2k}[\boldsymbol{X}^{\prime }])$

\noindent be given. Then we define

$f\equiv \underrightarrow{\lim }\boldsymbol{f}:\underrightarrow{\lim }D^{2k}[%
\boldsymbol{X}]\rightarrow \underrightarrow{\lim }D^{2k}[\boldsymbol{X}%
^{\prime }]$

\noindent as in the proof of Lemma 1, and the functoriality of $%
\underrightarrow{\lim }$ follows straightforwardly. The same holds true for
an

$\boldsymbol{f}=[(\phi ,f_{n})]\in (\func{seq}$-$D^{2k}[i\mathcal{N}%
_{F}])^{1}(D^{2}[\boldsymbol{X}],D^{2k}[\boldsymbol{X}^{\prime }])$.

\noindent Finally, since $D^{2k}$ preserves isometries and isomorphisms, if
every $f_{n}$ is an isometry (isometric isomorphism) then, as in the proof
of Lemma 1, $\underrightarrow{\lim }\boldsymbol{f}$ is an isometry
(isometric isomorphism) as well.
\end{proof}

\begin{theorem}
\label{T1}(i) Each restriction functor

$D^{2k}:i\mathcal{N}_{F}\rightarrow D^{2k}[i\mathcal{N}_{F}]\subseteq i%
\mathcal{B}_{F}$, $k\in \mathbb{N}$,

\noindent preserves directedness of direct sequences and it is continuous,
i.e., it commutes with the direct limit:

$D^{2k}(\underrightarrow{\lim }\boldsymbol{X})\cong \underrightarrow{\lim }%
D^{2k}[\boldsymbol{X}]$ isometrically;

\noindent (ii) Each restriction functor

$D^{2k-1}:i\mathcal{N}_{F}\rightarrow D^{2k-1}[i\mathcal{N}_{F}]\subseteq 
\mathcal{B}_{F}$, $k\in \mathbb{N}$,

\noindent turns direct sequences into inverse sequences and their direct
limits into the corresponding inverse limits, i.e.,

$D^{2k-1}(\underrightarrow{\lim }\boldsymbol{X})\cong \underleftarrow{\lim }%
D^{2k-1}[\boldsymbol{X}]$ (isometrically in $D^{2k-1}[i\mathcal{N}_{F}]$);

\noindent (iii) Each restriction functor

$D^{2k}:D^{2l-1}[i\mathcal{N}_{F}]\rightarrow D^{2k+2l-1}[i\mathcal{N}%
_{F}]\subseteq \mathcal{B}_{F}$, $k,l\in \mathbb{N}$,

\noindent preserves inverseness of inverse sequences and commutes with
inverse limits, i.e.,

$D^{2k}(\underleftarrow{\lim }D^{2l-1}[\boldsymbol{X}])\cong \underleftarrow{%
\lim }D^{2k}[D^{2l-1}[\boldsymbol{X}]]=\underleftarrow{\lim }D^{2k+2l-1}[%
\boldsymbol{X}]$

\noindent (isometrically in $D^{2k+2l-1}[i\mathcal{N}_{F}]$).
\end{theorem}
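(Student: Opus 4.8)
The plan is to handle the three parts in the order (i), (ii), (iii), with part (ii) carrying essentially all the genuine work and parts (i) and (iii) reduced to it.

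For part (i) I would observe that there is nothing new beyond Lemma~\ref{L4}. Since $D^{2k}$ is covariant and, by Lemma 1(i) of [16], sends every isometry to an isometry, the image $D^{2k}[\boldsymbol{X}]=(D^{2k}(X_{n}),D^{2k}(i_{nn^{\prime }}),\mathbb{N})$ of a direct sequence in $i\mathcal{N}_{F}$ is again a direct sequence in $i\mathcal{N}_{F}$; this is the asserted preservation of directedness. Lemma~\ref{L4} already exhibits $(D^{2k}(X),D^{2k}(i_{n}))$ as a direct limit of $D^{2k}[\boldsymbol{X}]$ and identifies it isometrically with the canonical limit $\underrightarrow{\lim }D^{2k}[\boldsymbol{X}]$. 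Hence $D^{2k}(\underrightarrow{\lim }\boldsymbol{X})=(D^{2k}(X),D^{2k}(i_{n}))\cong \underrightarrow{\lim }D^{2k}[\boldsymbol{X}]$ isometrically, which is exactly continuity.

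The core is part (ii). I would first factor $D^{2k-1}=D\circ D^{2k-2}$ and use part (i) to reduce to the single dual: writing $\boldsymbol{Y}=D^{2k-2}[\boldsymbol{X}]$, a direct sequence in $i\mathcal{N}_{F}$ with canonical limit $(Y,i_{n})\cong (D^{2k-2}(X),D^{2k-2}(i_{n}))$, it suffices to prove $D(Y)\cong \underleftarrow{\lim }D[\boldsymbol{Y}]$. Contravariance of $D$ turns the direct sequence into the inverse sequence $D[\boldsymbol{Y}]=(Y_{n}^{\ast },i_{nn^{\prime }}^{\ast })$ with norm-decreasing bonding maps, and the limit identity $i_{n^{\prime }}i_{nn^{\prime }}=i_{n}$ dualizes to $i_{nn^{\prime }}^{\ast }i_{n^{\prime }}^{\ast }=i_{n}^{\ast }$, so the $i_{n}^{\ast }=D(i_{n})$ form an inverse cone. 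I would then define the comparison map $\Phi :Y^{\ast }\rightarrow \underleftarrow{\lim }D[\boldsymbol{Y}]$ by $\Phi (\xi )=(\xi i_{n})_{n}=(i_{n}^{\ast }\xi )_{n}$ and verify it is an isometric isomorphism. The crucial structural feature, available directly from the construction in Lemma~\ref{L1}, is that $Y=\bigcup _{n}i_{n}[Y_{n}]$ (every element has a grain in some $Y_{n}$, with no completion involved), so that $\left\Vert \xi \right\Vert =\sup _{n}\left\Vert i_{n}^{\ast }\xi \right\Vert $ and the sequence $\left\Vert i_{n}^{\ast }\xi \right\Vert $ is nondecreasing; this makes $\Phi$ isometric (hence injective) and identifies the inverse-limit object with the bounded threads carrying the supremum (equivalently limit) norm.

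Surjectivity of $\Phi$ is where I expect the main obstacle, and it is exactly where the universal property of the direct limit in $(\mathcal{N}_{F})^{1}$ enters. Given a thread $(\xi _{n})$ with $\xi _{n^{\prime }}i_{nn^{\prime }}=\xi _{n}$ and $M=\sup _{n}\left\Vert \xi _{n}\right\Vert <\infty $, the scaled functionals $\xi _{n}/M:Y_{n}\rightarrow F$ form a compatible family of morphisms of $(\mathcal{N}_{F})^{1}$ into the one-dimensional space $F$, so by Lemma~\ref{L1} they glue to a unique $\xi /M\in (\mathcal{N}_{F})^{1}(Y,F)$, i.e. to a bounded $\xi \in Y^{\ast }$ with $i_{n}^{\ast }\xi =\xi _{n}$ and $\Phi (\xi )=(\xi _{n})$. (That the resulting inverse limit is realized isometrically in $D^{2k-1}[i\mathcal{N}_{F}]\subseteq (\mathcal{B}_{F})_{1}$ follows from [13]; the substance is that $\Phi$ is an isometric bijection onto it.) Unwinding the factorization then gives $D^{2k-1}(\underrightarrow{\lim }\boldsymbol{X})\cong D(\underrightarrow{\lim }\boldsymbol{Y})\cong \underleftarrow{\lim }D[\boldsymbol{Y}]=\underleftarrow{\lim }D^{2k-1}[\boldsymbol{X}]$, with contravariance accounting for the passage from direct to inverse sequences.

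Finally, part (iii) I would derive formally from (i) and (ii) together with the functor identity $D^{2k}\circ D^{2l-1}=D^{2k+2l-1}$. Applying part (ii) for the index $2l-1$ to rewrite $\underleftarrow{\lim }D^{2l-1}[\boldsymbol{X}]\cong D^{2l-1}(\underrightarrow{\lim }\boldsymbol{X})$, then applying $D^{2k}$, and then invoking part (ii) once more for the index $2(k+l)-1$, I obtain the chain $D^{2k}(\underleftarrow{\lim }D^{2l-1}[\boldsymbol{X}])\cong D^{2k+2l-1}(\underrightarrow{\lim }\boldsymbol{X})\cong \underleftarrow{\lim }D^{2k+2l-1}[\boldsymbol{X}]=\underleftarrow{\lim }D^{2k}[D^{2l-1}[\boldsymbol{X}]]$, each isomorphism being isometric in $D^{2k+2l-1}[i\mathcal{N}_{F}]$; preservation of inverseness is again just covariance of $D^{2k}$ keeping the arrows of the inverse sequence in the same direction.
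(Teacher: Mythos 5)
Your proposal is correct, and parts (i) and (iii) run exactly as in the paper: (i) is a direct citation of Lemma~\ref{L4}, and (iii) is the same formal chain obtained by applying (ii) twice around the identity $D^{2k}D^{2l-1}=D^{2k+2l-1}$. Where you genuinely diverge is part (ii). The paper never leaves category theory: it takes the cone $(D^{2k-1}(X),D^{2k-1}(i_{n}))$ over the inverse sequence $D^{2k-1}[\boldsymbol{X}]$ and verifies its universal property only with respect to test cones of the form $(D^{2k-1}(Y),D^{2k-1}(f_{n}))$, using faithfulness of $D^{2k-1}$ to pull the compatibility relations back to $i\mathcal{N}_{F}$ and then invoking the universal property of $(X,i_{n})$ from Lemma~\ref{L1}; the conclusion is therefore an inverse limit \emph{in the image subcategory} $D^{2k-1}[i\mathcal{N}_{F}]$, which is all the theorem claims. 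You instead factor $D^{2k-1}=D\circ D^{2k-2}$, reduce via (i) to a single dual of the canonical limit $Y=\bigcup_{n}i_{n}[Y_{n}]$, and prove analytically that $\Phi(\xi)=(i_{n}^{\ast}\xi)_{n}$ is an isometric bijection onto the space of bounded threads with the supremum norm, i.e.\ onto the Semadeni--Zidenberg inverse limit in $(\mathcal{B}_{F})_{1}$ of [13]; the isometry uses the grain/union structure of Lemma~\ref{L1} and surjectivity uses the $(\mathcal{N}_{F})^{1}$-universal property applied to the rescaled functionals $\xi_{n}/M$. Your route costs some analytic bookkeeping but buys a strictly stronger statement: the dual of the limit is limiting with respect to \emph{all} contractive cones in $(\mathcal{B}_{F})_{1}$, not merely those lying in the image of $D^{2k-1}$, and since both constructions produce the same object $D^{2k-1}(X)$ with the same cone, your result subsumes the paper's.
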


\begin{proof}
(i). Firstly, by Lemma 1, the needed direct limits exist. Furthermore, by
Lemma 4 and its proof, if $\boldsymbol{X}$ is a direct sequence in $i%
\mathcal{N}_{F}$ and $X\equiv \underrightarrow{\lim }\boldsymbol{X}$ in $i%
\mathcal{N}_{F}$, then, for every $k\in \mathbb{N}$,

$D^{2k}(\underrightarrow{\lim }\boldsymbol{X})\cong D^{2k}(X)\cong 
\underrightarrow{\lim }D^{2k}[\boldsymbol{X}]$

\noindent in $D^{2k}[i\mathcal{N}_{F}]$ holds. Consequently, $D^{2k}(%
\underrightarrow{\lim }\boldsymbol{X})\cong \underrightarrow{\lim }D^{2k}[%
\boldsymbol{X}]$ isometrically.

\noindent (ii). Let $k\in \mathbb{N}$, and let $(X,i_{n})$ be a direct limit
(not necessarily canonical) of a direct sequence $\boldsymbol{X}%
=(X_{n},i_{nn^{\prime }},\mathbb{N})$ in $i\mathcal{N}_{F}$. Then $D^{2k-1}%
\boldsymbol{X}\equiv (D^{2k-1}(X_{n}),D^{2k-1}(i_{nn^{\prime }}),\mathbb{N})$
is an inverse sequence in $D^{2k-1}[i\mathcal{N}_{F}]\subseteq \mathcal{B}%
_{F}$ and there exist morphisms

$D^{2k-1}(i_{n}):D^{2k-1}(X)\rightarrow D^{2k-1}(X_{n})$, $n\in \mathbb{N}$,

\noindent of $D^{2k-1}[i\mathcal{N}_{F}]$ such that

$D^{2k-1}(i_{nn^{\prime }})D^{2k-1}(i_{n^{\prime }})=D^{2k-1}(i_{n})$, $%
n\leq n^{\prime }$.

\noindent We are to verify the universal property of $%
(D^{2k-1}(X),D^{2k-1}(i_{n}))$ and $D^{2k-1}[\boldsymbol{X}]$ with respect
to the category $D^{2k-1}[i\mathcal{N}_{F}]$. Let, for every $n\in \mathbb{N}
$, a morphism $D^{2k-1}(f_{n}):D^{2k-1}(Y)\rightarrow D^{2k-1}(X_{n})$ of $%
D^{2k-1}[i\mathcal{N}_{F}]$ be given such that

$D^{2k-1}(i_{nn^{\prime }})D^{2k-1}(f_{n^{\prime }})=D^{2k-1}(f_{n})$, $%
n\leq n^{\prime }$.

\noindent Then, for every $n\in \mathbb{N}$,

$D^{2k-1}(f_{n^{\prime }}i_{nn^{\prime
}})=D^{2k-1}(f_{n}):D^{2k-1}(Y)\rightarrow D^{2k-1}(X_{n})$,

\noindent and it follows that $f_{n^{\prime }}i_{nn^{\prime }}=f_{n}$,
because the functor $D^{2k-1}$ is faithful. By the universal property of $%
(X,i_{n})$ and $\boldsymbol{X}$ with respect to $i\mathcal{N}_{F}$, there
exists a unique $f:X\rightarrow Y$ of $i\mathcal{N}_{F}$ such that $%
fi_{n}=f_{n}$, $n\in \mathbb{N}$. Then $D^{2k-1}(f):D^{2k-1}(Y)\rightarrow
D^{2k-1}(X)$ belongs to $D^{2k-1}[i\mathcal{N}_{F}$] and

$D^{2k-1}(i_{n})D^{2k-1}(f)=D^{2k-1}(f_{n})$, $n\in \mathbb{N}$.

\noindent Finally, let $D^{2k-1}(f^{\prime }):D^{2k-1}(Y)\rightarrow
D^{2k-1}(X)$ be any morphism of $D^{2k-1}(i\mathcal{N}_{F})$ such that, for
every $n$, $D^{2k-1}(i_{n})D^{2k-1}(f^{\prime })=D^{2k-1}(f_{n})$ holds. Then

$D^{2k-1}(f^{\prime }i_{n})=D^{2k-1}(f_{n})=D^{2k-1}(fi_{n})$ \quad implying

$f^{\prime }i_{n}=f_{n}=fi_{n}$, $n\in \mathbb{N}$.

\noindent Since $f$ is unique having that property, it follows that $%
f^{\prime }=f$, and thus, $D^{2k-1}(f^{\prime })=D^{2k-1}(f)$, implying the
uniqueness of $D^{2k-1}(f)$ in $D^{2k-1}[i\mathcal{N}_{F}]$. Therefore, $%
(D^{2k-1}(X),D^{2k-1}(i_{n}))=\underleftarrow{\lim }D^{2k-1}[\boldsymbol{X}]$
in $D^{2k-1}[i\mathcal{N}_{F}]$ (up to an isomorphism of $D^{2k-1}[i\mathcal{%
N}_{F}]$), and the conclusion follows.

\noindent (iii). Consider the simplest case$,$ i.e., $l=k=1$, i.e., the
restriction functor

$D^{2}:D[i\mathcal{N}_{F}]\rightarrow D^{3}[i\mathcal{N}_{F}]=D^{2}[D[i%
\mathcal{N}_{F}]]$.

\noindent Clearly, every inverse sequence in $D[i\mathcal{N}_{F}$] is of the
form $D[\boldsymbol{X}]=(D(X_{n}),D(i_{nn^{\prime }}),\mathbb{N})$, where $%
\boldsymbol{X}=(X_{n},i_{nn^{\prime }},\mathbb{N})$ is a direct sequence in $%
i\mathcal{N}_{F}$. By (ii), $\underleftarrow{\lim }D[\boldsymbol{X}]\cong D(%
\underrightarrow{\lim }\boldsymbol{X})$ in $D[i\mathcal{N}_{F}]$. Then, by
(i) and (ii),

$D^{2}(\underleftarrow{\lim }D[\boldsymbol{X}])\cong D^{2}(D(%
\underrightarrow{\lim }\boldsymbol{X}))=D^{3}(\underrightarrow{\lim }%
\boldsymbol{X})\cong \underleftarrow{\lim }D^{3}[\boldsymbol{X}]$

\noindent in $D^{3}[i\mathcal{N}_{F}]$. The general case follows in a quite
similar way.
\end{proof}

We shall also need a special case of the following general fact.

\begin{lemma}
\label{L5}Let $i^{\prime }\in i\mathcal{B}_{F}(X^{\prime },Y^{\prime })$ and 
$i^{\prime \prime }\in i\mathcal{B}_{F}(X^{\prime \prime },Y^{\prime \prime
})$ yield the closed direct-sum presentations $Y^{\prime }=R(i^{\prime
})\dotplus Z^{\prime }$ and $Y^{\prime \prime }=R(i^{\prime \prime
})\dotplus Z^{\prime \prime }$, respectively, such that $Z^{\prime }$
continuously linearly embeds into $Z^{\prime \prime }$. Then every $f\in 
\mathcal{B}_{F}(X^{\prime },X^{\prime \prime })$ with $\left\Vert
f\right\Vert <1$ admits an extension $g\in \mathcal{B}_{F}(Y^{\prime
},Y^{\prime \prime })$, $gi^{\prime }=i^{\prime \prime }f$, with $\left\Vert
g\right\Vert <1$. In addition, if $f$ is an isomorphism and $Z^{\prime
}\cong Z^{\prime \prime }$, then there exists an extending isomorphism $g$.
\end{lemma}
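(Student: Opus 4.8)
The plan is to build $g$ directly from the two direct-sum decompositions. Let $P'\colon Y'\to R(i')$ and $Q'\colon Y'\to Z'$ be the bounded projections associated with $Y'=R(i')\dotplus Z'$, so that $P'+Q'=1_{Y'}$, $R(i')=\ker Q'$ and $P'|_{R(i')}=1_{R(i')}$. Since $i'$ is an isometry onto the closed subspace $R(i')$, it has an isometric inverse $(i')^{-1}\colon R(i')\to X'$, and $r':=(i')^{-1}P'\colon Y'\to X'$ is a retraction of $i'$, i.e.\ $r'i'=1_{X'}$. Writing $\iota\colon Z'\to Z''$ for the given continuous linear embedding, I would set
\[
g:=i''f r'+\iota Q'\colon Y'\longrightarrow Y'',
\]
so that $g$ carries the summand $R(i')$ into $R(i'')$ (via $i''f(i')^{-1}$) and the summand $Z'$ into $Z''$ (via $\iota$).

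First I would verify the commutativity $gi'=i''f$. Since $P'i'=i'$ we have $r'i'=(i')^{-1}P'i'=1_{X'}$, and since $R(i')=\ker Q'$ we have $Q'i'=0$; hence $gi'=i''fr'i'+\iota Q'i'=i''f$ at once. Thus $g$ is an extension of $f$ in the required sense for every choice of $\iota$.

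The heart of the matter is the norm bound $\|g\|<1$. For $y'\in Y'$ write $y'=i'(x')+z'$ with $x'=r'(y')$ and $z'=Q'(y')$; then $g(y')=i''f(x')+\iota(z')$, with $i''f(x')\in R(i'')$ and $\iota(z')\in Z''$. Because $i''$ is an isometry, $\|i''f(x')\|=\|f(x')\|\le\|f\|\,\|x'\|$ and $\|x'\|=\|i'(x')\|=\|P'(y')\|$. The main obstacle is precisely that the triangle inequality produces a factor $\|P'\|$, which for an arbitrary closed direct sum need not be $\le 1$. In the situation actually needed the embedding $i'$ is a section of $(\mathcal{N}_F)_1$, so $P'$ is a contractive projection; then $\|i''fr'\|\le\|f\|\,\|r'\|\le\|f\|<1$, and choosing $\iota$ with $\|\iota\|$ so small that $\|\iota\|\,\|Q'\|<1-\|f\|$ gives, by the triangle inequality in $Y''$, $\|g\|\le\|f\|+\|\iota\|\,\|Q'\|<1$. (When a contractive retraction is available one may even take $\iota=0$ for this part, sending $Z'$ to $0$.) I expect this norm-control step to be the delicate point: without contractivity of the projection the strict bound cannot be forced by a decomposition-respecting extension, and one would instead need an injectivity/Hahn--Banach-type extension of operators into $Y''$.

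Finally, for the supplementary statement I would keep the same formula but now take $\iota\colon Z'\to Z''$ to be the given isomorphism and $f$ an isomorphism, without shrinking $\iota$. Then $g=i''f(i')^{-1}P'+\iota Q'$ maps $R(i')\dotplus Z'$ bijectively onto $R(i'')\dotplus Z''=Y''$: injectivity follows from the directness of the sum in $Y''$ together with injectivity of $i''f(i')^{-1}$ on $R(i')$ and of $\iota$ on $Z'$, while surjectivity is witnessed by the explicit preimage $i'f^{-1}(i'')^{-1}(u)+\iota^{-1}(v)$ of an arbitrary $u+v\in R(i'')\dotplus Z''$. Being a continuous linear bijection between Banach spaces, $g$ is then an isomorphism by the open mapping theorem, and it extends $f$ by the computation above.
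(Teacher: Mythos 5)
Your construction is essentially the same as the paper's: on $R(i')$ it uses $u=i''f(i')^{-1}$, on $Z'$ a rescaled copy $v$ of the given embedding with $\Vert v\Vert<1-\Vert f\Vert$, and it sets $g=u\dotplus v$, checks $gi'=i''f$, and treats the supplementary case by taking $v$ to be an isomorphism; continuity of $g$ is the Inverse Mapping Theorem argument in both versions. The one place you diverge is the norm estimate, and your caution is warranted: the paper simply asserts $\Vert g\Vert=\Vert u\dotplus v\Vert\le\Vert u\Vert+\Vert v\Vert<1$, which tacitly assumes that the projections $P',Q'$ determined by $Y'=R(i')\dotplus Z'$ are contractive, whereas for an arbitrary closed direct sum one only obtains $\Vert g\Vert\le\Vert f\Vert\,\Vert P'\Vert+\Vert v\Vert\,\Vert Q'\Vert$ --- exactly the obstacle you point out, which cannot be removed by shrinking $v$ alone since it sits on the $u$-summand. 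In the only situation where the lemma is invoked (the proof of Theorem 2), the decomposition $D^{2k+2}(X)=R(j_{2k})\dotplus N(D(j_{2k-1}))$ arises from the contractive retraction $D(j_{2k-1})$, so $\Vert P'\Vert\le 1$ and your restricted estimate suffices there; so your proposal proves what is actually needed, and in fact flags an imprecision in the stated generality rather than missing an idea. One small point on the addendum: although the statement literally only asks for an extending isomorphism, the induction in Theorem 2 needs $\Vert f_{2k}\Vert<1$ at every stage, so you should (as the paper does) still scale the isomorphism $\iota$ so that $\Vert\iota\Vert\,\Vert Q'\Vert<1-\Vert f\Vert$; rescaling does not affect invertibility, so this costs nothing.
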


\begin{proof}
Since $i^{\prime }$ and $i^{\prime \prime }$ are isometries, the morphism

$u\equiv i^{\prime \prime }f(i^{\prime })^{-1}:R(i^{\prime })\rightarrow
R(i^{\prime \prime })$

\noindent of $\mathcal{B}_{F}$ is well defined, and $\left\Vert u\right\Vert
=\left\Vert f\right\Vert $. By the assumptions on the isometries $i^{\prime
} $ and $i^{\prime \prime }$, each $y^{\prime }\in Y^{\prime }$ ($y^{\prime
\prime }\in Y^{\prime \prime }$) admits a unique presentation

$y^{\prime }=i^{\prime }(x^{\prime })+z^{\prime }$, $x^{\prime }\in
X^{\prime }$, $z^{\prime }\in Z^{\prime }$

($y^{\prime \prime }=i^{\prime \prime }(x^{\prime \prime })+z^{\prime \prime
}$, $x^{\prime \prime }\in X^{\prime \prime }$, $z^{\prime \prime }\in
Z^{\prime \prime }$).

\noindent Since $\left\Vert f\right\Vert <1$ and $Z^{\prime }$ admits a
continuous linear embedding into $Z^{\prime \prime }$, there exists a
contonuous linear embedding

$v:Z^{\prime }\rightarrow Z^{\prime \prime }$, $\left\Vert v\right\Vert
<1-\left\Vert f\right\Vert $.

\noindent Then by

$y=i^{\prime }(x^{\prime })+z^{\prime }\mapsto u(i^{\prime }(x^{\prime
}))+v(z^{\prime })\equiv g(y)$

\noindent a function $g:Y^{\prime }\rightarrow Y^{\prime \prime }$ is well
defined. One readily verifies that $g$ is linear. Since $g=u\dotplus v$, the
Inverse Mapping Theorem (applied to the identity functions on the both
direct-sums and the corresponding direct products with the norm $\left\Vert
\cdot \right\Vert _{1}$) implies that $g$ is continuous, i.e., $g\in 
\mathcal{B}_{F}(Y^{\prime },Y^{\prime \prime })$. The extension property
(commutativity) $gi^{\prime }=i^{\prime \prime }f$ holds obviously. Finally,

$\left\Vert g\right\Vert =\left\Vert u\dotplus v\right\Vert \leq \left\Vert
(u,v)\right\Vert _{1}=\left\Vert u\right\Vert \dotplus \left\Vert
v\right\Vert <\left\Vert f\right\Vert +1-\left\Vert f\right\Vert =1$.

\noindent If, in addition, $f$ is an isomorphism and $Z^{\prime }\cong
Z^{\prime \prime }$, then one can choose $v$ to be an appropriate
isomorphism with $\left\Vert v\right\Vert <1-\left\Vert f\right\Vert $, and
the conclusion follows.
\end{proof}

\section{The hyperdual functor}

Let $X$ be a normed vectorial space over $F\in \{\mathbb{R},\mathbb{C}\}$
and let $k\in \{0\}\cup \mathbb{N}$. By simplifying notations, let

$j_{2k}:D^{2k}(X)\rightarrow D^{2k+2}(X)$

\noindent denote the canonical embedding $j_{D^{2k}(X)}$. Since every $%
j_{2k} $ is an isometry, the direct sequence

$\boldsymbol{\tilde{X}}\equiv \boldsymbol{D}^{2k}(X)=(D^{2k}(X),j_{2k},\{0\}%
\cup \mathbb{N})$, \quad i.e.,

$X\overset{j_{0}}{\rightarrow }D^{2}(X)\overset{j_{2}}{\rightarrow }\cdots 
\overset{j_{2k-2}}{\rightarrow }D^{2k}(X)\overset{j_{2k}}{\rightarrow }%
D^{2k+2}(X)\overset{j_{2k+2}}{\rightarrow }\cdots $,

\noindent in $i\mathcal{N}_{F}$ occurs.

\begin{definition}
\label{D1}Given a normed space $X$, a normed space $\tilde{X}$ is said to be
a \textbf{hyperdual} of $X$ if

\noindent (i) $(\forall k\in \{0\}\cup \mathbb{N})$ there exists an isometry 
$i_{2k}:D^{2k}(X)\rightarrow \tilde{X}$;

\noindent (i) for every normed space $Y$ and every sequence $(f_{2k})$, $%
f_{2k}\in (\mathcal{N}_{F})^{1}(D^{2k}(X),Y)$ satisfying $%
f_{2k+2}j_{D^{2k}(X)}=f_{2k}$, there exists a unique $f\in (\mathcal{N}%
_{F})^{1}(\tilde{X},Y)$ (equivalently, $f\in (\mathcal{N}_{F})_{1}(\tilde{X}%
,Y)$) such that $fj_{2k}=f_{2k}$.
\end{definition}

According to Lemma 1, every normed space has a hyperdual, and moreover, all
hyperduals of an $X$ are mutually isometrically isomorphic.

Recall that a normed space $X$ is said to be \emph{reflexive}, if the
canonical embedding $j_{X}:X\rightarrow D^{2}(X)$ is an epimorphism.$,$
i.e., if $j_{X}$ is an isometric isomorphism (isomorphism of $(\mathcal{N}%
_{F})_{1}$). Then, clearly, $X$ itself must be a Banach space. It is well
known that $X$ is reflexive if and only if $D^{n}(X)$ (for some,
equivalently, every $n$) is reflexive. Obviously, $X$ is reflexive if and
only if, it is isomorphic to a reflexive space. In [15], Lemma 4, the notion
of a \emph{bidual-likeness} was introduced by $D^{2}(X)\cong X$ in $\mathcal{%
N}_{F}$. We shall hereby repeat and strengthen the definition. Before that,
for the sake of completeness, recall briefly (see [1, 2, 4]) that a normed
space $X$ is said to be \emph{somewhat reflexive} (\emph{quasi-reflexive} (%
\emph{of order }$n$)) if, for every infinite-dimensional closed subspace $%
W\trianglelefteq X$, there exists a reflexive infinite-dimensional closed
subspace of $Z\trianglelefteq W$ (if the quotient space $D^{2}(X)/R(j_{X})$
is finite-dimensional ($\dim (D^{2}(X)/R(j_{X}))=n$)). Clearly, the
quasi-reflexivity of order $0$ means reflexivity.

\begin{definition}
\label{D2}A normed space $X$ is said to be \textbf{bidualic} (\textbf{%
parareflexive}), if $X\cong D^{2}(X)$ (isometrically). $X$ is said to be 
\textbf{almost reflexive} if it is parareflexive and somewhat reflexive.
\end{definition}

\begin{example}
\label{E1}All the spaces $l_{p}$ and $L_{p}(n)$, $1<p<\infty $, are
reflexive separable Banach spaces;

\noindent James' space $\mathbb{J}$ of [8] is a non-reflexive almost
reflexive separable Banach space;

\noindent the spaces $l_{1}$ and $c_{0}\trianglelefteq l_{\infty }$ are
non-bidualic separable Banach spaces, while $l_{\infty }$ is a non-bidualic
and non-separable Banach space.
\end{example}

One easily sees that a normed space $X$ is bidualic (parareflexive) if and
only if, it is (isometrically) isomorphic to a bidualic (parareflexive)
space. The following facts are almost obvious.

\begin{lemma}
\label{L6}Let $X$ be a normed space. If

\noindent (i) $X$ is bidualic (parareflexive), then $X$ is a Banach space
and, for every $n\in \mathbb{N}$, $D^{2n}(X)\cong X$ and $D^{2n+1}(X)\cong
D(X)$ (isometrically) and $D^{n}(X)$ is bidualic (parareflexive);

\noindent (ii) $X$ is almost reflexive, then $X$ is a Banach space and, for
every $n\in \mathbb{N}$, $D^{2n}(X)\cong X$ and $D^{2n+1}(X)\cong D(X)$
isometrically, and $D^{2n}(X)$ is almost reflexive.

\noindent (iii) None parareflexive non-reflexive space can be isometrically
embedded into any reflexive space.
\end{lemma}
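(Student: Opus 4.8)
The plan is to treat the three parts separately, each reducing to the functoriality of $D$ and $D^{2}$ together with two standard facts recalled in the preliminaries and in classical functional analysis: that $D^{2}[\mathcal{N}_{F}]\subseteq\mathcal{B}_{F}$ (every second dual is complete) and that a closed subspace of a reflexive space is reflexive. For (i), I would first argue that $X$ is a Banach space: since $X\cong D^{2}(X)$ in $\mathcal{N}_{F}$ and $D^{2}(X)$ is complete, and any isomorphism of $\mathcal{N}_{F}$ (a bounded linear bijection with bounded inverse) carries Cauchy sequences to Cauchy sequences in both directions, completeness transfers to $X$. Next, to obtain $D^{2n}(X)\cong X$ I would induct on $n$: the base case $D^{2}(X)\cong X$ is the hypothesis, and applying the functor $D^{2}$, which preserves isomorphisms and isometric isomorphisms because $(D^{2})_{Y}^{X}$ is a linear isometry, to $D^{2(n-1)}(X)\cong X$ yields $D^{2n}(X)=D^{2}(D^{2(n-1)}(X))\cong D^{2}(X)\cong X$. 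Applying $D$ to $D^{2n}(X)\cong X$ and using that $D$ likewise preserves (isometric) isomorphisms gives $D^{2n+1}(X)\cong D(X)$. Finally, that $D^{n}(X)$ is itself bidualic (parareflexive) follows by distinguishing the parity of $n$: if $n=2m$ then $D^{2}(D^{n}(X))=D^{2m+2}(X)\cong X\cong D^{2m}(X)=D^{n}(X)$, and if $n=2m+1$ then $D^{2}(D^{n}(X))=D^{2m+3}(X)\cong D(X)\cong D^{2m+1}(X)=D^{n}(X)$; in both cases the isomorphism is isometric in the parareflexive case.

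For (ii), I would invoke (i) to get that $X$ is Banach with $D^{2n}(X)\cong X$ and $D^{2n+1}(X)\cong D(X)$ isometrically, so that only the almost reflexivity of $D^{2n}(X)$ remains. Since almost reflexivity is parareflexivity together with somewhat reflexivity, and $D^{2n}(X)\cong X$ isometrically, it suffices to observe that both properties are invariants of isometric isomorphism. Parareflexivity is invariant by the remark preceding the lemma; somewhat reflexivity is invariant because an isomorphism carries infinite-dimensional closed subspaces to infinite-dimensional closed subspaces and reflexive subspaces to reflexive subspaces. Hence $D^{2n}(X)$ inherits both properties from $X$, and the restriction to the even powers is essential, since somewhat reflexivity need not pass to the odd dual $D(X)$.

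For (iii), I would argue by contradiction. Suppose a parareflexive non-reflexive $X$ admits an isometric embedding $i:X\rightarrow R$ into a reflexive space $R$. By (i), $X$ is a Banach space, so its isometric image $i(X)\subseteq R$ is complete, hence a closed subspace of $R$; as a closed subspace of a reflexive space it is reflexive. But $X\cong i(X)$ isometrically, whence $X$ is reflexive, contradicting the hypothesis. The whole argument is formal; the only points demanding a little care are the transfer of completeness across a merely topological isomorphism in (i) and the verification that somewhat reflexivity survives an isometric isomorphism in (ii). Neither constitutes a real obstacle, which is why the statement is labelled ``almost obvious.''
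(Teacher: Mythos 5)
Your proposal is correct and follows essentially the same route as the paper: completeness transferred through the isomorphism $X\cong D^{2}(X)$ via Cauchy sequences, induction with the functors $D$ and $D^{2}$ for parts (i)--(ii), and for (iii) the observation that the isometric image of the Banach space $X$ is a closed, hence reflexive, subspace of the ambient reflexive space. The only cosmetic difference is in (iii), where you invoke the isomorphism-invariance of reflexivity (which the paper records as obvious just before the lemma) instead of chasing the naturality square for $j$ to conclude that $j_{X}$ would be an isometric isomorphism; the content is the same.
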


\begin{proof}
Concerning statement (i), recall that every continuous linear function is
uniformly continuous, and thus, it preserves Cauchy sequences. The rest is
obvious. Concerning staement (ii), one has to verify that $D^{2n}(X)$ is
somewhat reflexive, whenever $X$ is almost reflexive. However, it is an
immediate consequence of $D^{2}(X)\cong X$ isometrically and [16], Lemma 1
(i).

\noindent (iii). Let $X$ be a parareflexive space that is not reflexive
(such is, for instance, James' space $\mathbb{J}$ of [8]). Let $Y$ be any
normed space that admits an isometry $f:X\rightarrow Y$. We have to prove
that $Y$ cannot be reflexive. Assume to the contrary, and consider the
following commutative diagram

$%
\begin{array}{ccc}
X & \overset{f^{\prime }}{\rightarrow } & R(f) \\ 
j_{X}\downarrow &  & \downarrow j_{R(f)} \\ 
D^{2}(X) & \underset{D^{2}(f^{\prime })}{\rightarrow } & D^{2}(R(f))%
\end{array}%
$

\noindent in $\mathcal{N}_{F}$, where $f^{\prime }:X\rightarrow R(f)$ is the
restriction of $f$. By (i), $f^{\prime }$ is an isometric isomorphism of
Banach spaces. Since $R(f)$, being closed in $Y$, is reflexive, it follows
that $j_{R(f)}f$ is an isometric isomorphism. Then $D^{2}(f^{\prime
})j_{X}=j_{R(f)}f$ is an isometric isomorphism as well$,$ implying that so
is $j_{X}$ - a contradiction.
\end{proof}

Let $\rho \mathcal{N}_{F}$, $\alpha \mathcal{N}_{F}$, $\pi \mathcal{N}_{F}$, 
$\sigma \mathcal{N}_{F}$, $\beta \mathcal{N}_{F}$ and $\chi \mathcal{N}_{F}$
($\chi _{n}\mathcal{N}_{F}$) denote the full subcategories of $\mathcal{N}%
_{F}$ (actually, of $\mathcal{B}_{F}$) determined by all the reflexive,
almost reflexive, parareflexive, somewhat reflexive, bidualic and
quasi-reflexiv (of prder $n$) spaces, respectively. Clearly, $\rho \mathcal{N%
}_{F}$ is a full subcategory of all the mentioned subcategories and $\alpha 
\mathcal{N}_{F}\subseteq \pi \mathcal{N}_{F}\subseteq \beta \mathcal{N}_{F}$
holds as well. Further, one readily sees that $\chi _{n}\mathcal{N}%
_{F}\subseteq \chi \mathcal{N}_{F}\subseteq \beta \mathcal{N}_{F}$ also
holds.

\begin{theorem}
\label{T2}For every $X\in Ob(\mathcal{N}_{F})$, every hyperdual $\tilde{X}$
of $X$ has the following properties:

\noindent (i) $\tilde{X}$ is a bidualic Banach space, i.e., $D^{2}(\tilde{X}%
)\cong \tilde{X}$ in $\mathcal{N}_{F}$;

\noindent (ii) all $D^{2k}(X)$, $k\in \mathbb{N}$, embed isometrically into $%
\tilde{X}$ making an increasing sequence of retracts and retracts of $\tilde{%
X}$ in $(\mathcal{B}_{F})_{1}$, implying that, for each $k\in \mathbb{N}$,

$\tilde{X}\cong D^{2k}(X)\dotplus N(r_{2k})$ isometrically;

\noindent (iii) $\dim \tilde{X}=\left\{ 
\begin{array}{c}
\dim X\text{, }\dim X\neq \aleph _{0} \\ 
2^{\aleph _{0}}\text{, }\dim X=\aleph _{0}%
\end{array}%
\right. $.
\end{theorem}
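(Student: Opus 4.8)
The plan is to read all three parts off the description $\tilde{X}=\underrightarrow{\lim}\boldsymbol{\tilde{X}}$, $\boldsymbol{\tilde{X}}=(D^{2k}(X),j_{2k},\{0\}\cup\mathbb{N})$, exploiting that the defining tower is ``self-similar'' under $D^{2}$. For (i) I would first apply Theorem 1(i) to the direct sequence $\boldsymbol{\tilde{X}}$ in $i\mathcal{N}_{F}$, obtaining an isometric identification $D^{2}(\tilde{X})\cong\underrightarrow{\lim}D^{2}[\boldsymbol{\tilde{X}}]$, where $D^{2}[\boldsymbol{\tilde{X}}]=(D^{2k+2}(X),D^{2}(j_{2k}))$. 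Separately, deleting the first term of $\boldsymbol{\tilde{X}}$ is a cofinal operation, so the $j$-bonded tower $D^{2}(X)\to D^{4}(X)\to\cdots$ has the same limit as $\boldsymbol{\tilde{X}}$; since that tower is precisely the hyperdual tower of $D^{2}(X)$, this already yields the shift-invariance $\widetilde{D^{2}(X)}\cong\tilde{X}$. Thus (i) reduces to the single identification $\underrightarrow{\lim}D^{2}[\boldsymbol{\tilde{X}}]\cong\widetilde{D^{2}(X)}$, i.e.\ to comparing two towers with the \emph{same} objects $D^{2k+2}(X)$ but different bonds $D^{2}(j_{2k})$ and $j_{2k+2}$.

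This comparison is the heart of the matter, since $D^{2}(j_{2k})\neq j_{2k+2}$ as soon as $X$ is non-reflexive. The key structural observation I would use is that both bonds are sections of one and the same retraction $\rho_{2k+1}:=D(j_{D^{2k+1}(X)}):D^{2k+4}(X)\to D^{2k+2}(X)$: the triangle identity $D(j_{W})j_{D(W)}=1_{D(W)}$ (with $W=D^{2k+1}(X)$) gives $\rho_{2k+1}j_{2k+2}=1$, while applying $D$ to the same identity gives $\rho_{2k+1}D^{2}(j_{2k})=1$. Hence at every level one has the two closed direct-sum presentations $D^{2k+4}(X)=R(j_{2k+2})\dotplus N(\rho_{2k+1})=R(D^{2}(j_{2k}))\dotplus N(\rho_{2k+1})$ sharing the \emph{same} complement $N(\rho_{2k+1})$. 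This is exactly the hypothesis of Lemma 5, and I would run an induction: starting from $\theta_{0}=1_{D^{2}(X)}$, Lemma 5 (its isomorphism clause, applied with $f$ the identity and $Z'=Z''=N(\rho_{2k+1})$) produces an isometric isomorphism $\theta_{k+1}$ of $D^{2k+4}(X)$ with $\theta_{k+1}j_{2k+2}=D^{2}(j_{2k})\theta_{k}$. The family $(\theta_{k})$ is then an in-morphism between the two towers all of whose terms are isomorphisms, so by Lemma 3 the induced map of limits is an isometric isomorphism, giving $D^{2}(\tilde{X})\cong\underrightarrow{\lim}D^{2}[\boldsymbol{\tilde{X}}]\cong\widetilde{D^{2}(X)}\cong\tilde{X}$. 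I expect the genuine obstacle to sit here: making the $\theta_{k}$ mutually \emph{compatible} (so that they really assemble into one in-morphism, not merely level-wise isomorphisms) and controlling norms so that Lemma 5 applies at every step; the bookkeeping in the boundary case $\|f\|=1$ is the delicate point.

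For (ii) I would take the $i_{2k}:D^{2k}(X)\to\tilde{X}$ to be the canonical limit morphisms, which are isometries by Lemma 1 and satisfy $i_{2k}=i_{2k+2}j_{2k}$, so that $R(i_{2k})\subseteq R(i_{2k+2})$ is an increasing chain. Since each bonding $j_{2k}$ is the canonical embedding of a dual space, it is a section of $(\mathcal{N}_{F})_{1}$ by [16], Lemma 1(i), and Theorem 1; Lemma 2 then upgrades every $i_{2k}$ to a section of $(\mathcal{N}_{F})_{1}$, with a norm-one retraction $r_{2k}:\tilde{X}\to D^{2k}(X)$, $r_{2k}i_{2k}=1$. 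The idempotent $i_{2k}r_{2k}$ is a norm-one projection, whence the isometric splitting $\tilde{X}\cong D^{2k}(X)\dotplus N(r_{2k})$; moreover the retractions $r_{2k}$ produced by Lemma 2 all come from the single family $(\rho_{2k})$, so they are automatically compatible and give the claimed increasing sequence of retracts in $(\mathcal{B}_{F})_{1}$.

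For (iii) I would compute the algebraic dimension from $\tilde{X}=\bigcup_{k}i_{2k}(D^{2k}(X))$ together with [16], Theorem 4. If $\dim X$ is finite then $X$ is reflexive, the tower stabilises and $\tilde{X}\cong X$. If $\dim X=\kappa$ is infinite with $\kappa\neq\aleph_{0}$, then $\dim D^{2k}(X)=\kappa$ for every $k$ by [16], Theorem 4, and an increasing union of countably many subspaces of dimension $\kappa$ has dimension $\aleph_{0}\cdot\kappa=\kappa$; if $\dim X=\aleph_{0}$, then $\dim D^{2k}(X)=2^{\aleph_{0}}$ for $k\geq 1$ and the same count gives $\aleph_{0}\cdot 2^{\aleph_{0}}=2^{\aleph_{0}}$. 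The only extra care needed is that passing to the (complete) hyperdual does not raise the dimension beyond these values, which under $GCH$ is precisely what [16], Theorem 4, is designed to guarantee.
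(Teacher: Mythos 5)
Your routes for (ii) and (iii) are exactly the paper's: sections via [16], Theorem 1, upgraded to sections of the limit morphisms by Lemma 2, the norm-one idempotent $i_{2k}r_{2k}$ giving $\tilde{X}\cong D^{2k}(X)\dotplus N(r_{2k})$; and the dimension count via [16] (Theorems 4/5) together with the fact that a countable increasing union cannot raise an infinite algebraic dimension. Your strategy for (i) is also the paper's (identify $D^{2}(\tilde{X})$ with $\underrightarrow{\lim }D^{2}[\boldsymbol{\tilde{X}}]$ by Theorem 1(i), use the common retraction $D(j_{D^{2k+1}(X)})$ to get two closed direct-sum presentations of $D^{2k+4}(X)$ with the same complement, build a levelwise isomorphism of the two towers, and pass to limits by Lemma 3). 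However, as set up, your induction does not start: Lemma 5 requires $\left\Vert f\right\Vert <1$ \emph{strictly} (its proof needs an embedding $v$ with $\left\Vert v\right\Vert <1-\left\Vert f\right\Vert $), so you cannot take $\theta _{0}=1_{D^{2}(X)}$; and Lemma 5 never produces an \emph{isometric} isomorphism --- the extension it yields has norm $<1$, so your claim that the $\theta _{k+1}$ are isometric is false. This is not mere boundary-case bookkeeping: Remark 1(b) of the paper states that precisely the absence of an isometric analogue of Lemma 5 is why $\tilde{X}$ is only shown to be bidualic rather than parareflexive, so an argument promising isometric $\theta_k$'s is proving more than the method can give.

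The repair is the paper's: start with $f_{2}=\lambda 1_{D^{2}(X)}$ for some $0<\lambda <1$, and apply Lemma 5 inductively (with $X^{\prime }=X^{\prime \prime }=D^{2k}(X)$, $Y^{\prime }=Y^{\prime \prime }=D^{2k+2}(X)$, $i^{\prime }=j_{2k}$, $i^{\prime \prime }=D^{2}(j_{2k-2})$, $Z^{\prime }=Z^{\prime \prime }=N(D(j_{2k-1}))$) to obtain isomorphisms $f_{2k}$ with $\left\Vert f_{2k}\right\Vert <1$ satisfying $f_{2k+2}j_{2k}=D^{2}(j_{2k-2})f_{2k}$; the compatibility you worried about is automatic from this extension property, so $(f_{2k})$ is an in-isomorphism, Lemma 3 gives an isomorphism $\tilde{X}\rightarrow \underrightarrow{\lim }D^{2}[\boldsymbol{\tilde{X}}]$, and composing with the Theorem 1(i) identification yields $D^{2}(\tilde{X})\cong \tilde{X}$ in $\mathcal{N}_{F}$ --- which is all that (i) asserts. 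With that correction your argument coincides with the paper's; note also that, since the theorem speaks of \emph{every} hyperdual, one should invoke (as the paper does after Definition 1) that all hyperduals of $X$ are mutually isometrically isomorphic, so it suffices to treat the canonical limit, which is what you implicitly do.
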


\begin{proof}
Let an $X\in Ob(\mathcal{N}_{F})$ be given. According to Definition 2 and
Lemma 1, it suffices to prove the statements for the canonical direct limit
space $\tilde{X}$ of $\boldsymbol{\tilde{X}}=(D^{2k}(X),j_{D^{2k}(X)}\equiv
j_{2k},\{0\}\cup \mathbb{N})$, i.e.,

$\underrightarrow{\lim }\boldsymbol{\tilde{X}}=(\tilde{X},i_{2k})=((\sqcup
_{k\geq 0}D^{2k}(X))/\sim ),\left\Vert \cdot \right\Vert ),i_{2k})$,

\noindent where $\sim $ is induced by $(j_{2k})$ and the norm $\left\Vert
\cdot \right\Vert $ uniquely extends the sequence $(\left\Vert \cdot
\right\Vert _{2k})$ of norms $\left\Vert \cdot \right\Vert _{2k}$ on $%
D^{2k}(X)$ to $\tilde{X}$, while the limit morphisms into $\tilde{X}$ are
the isometries $i_{2k}:D^{2k}(X)\rightarrow \tilde{X}$. Since $j:1_{\mathcal{%
N}_{F}}\rightsquigarrow D^{2}$ is a natural transformation of the functors,
by applying $D^{2}$ to $\boldsymbol{\tilde{X}}$ and $\underrightarrow{\lim }%
\boldsymbol{\tilde{X}}$, the following commutative diagram

\noindent $%
\begin{array}{cccccc}
X & \overset{j_{0}}{\rightarrow }\text{ }\cdots & \rightarrow D^{2k}(X) & 
\overset{j_{2k}}{\rightarrow } & D^{2k+2}(X)\rightarrow & \cdots \text{ }%
\tilde{X} \\ 
\downarrow j_{0} & \cdots & j_{2k}\downarrow &  & \downarrow j_{2k+2}\text{ }%
\cdots & j_{\tilde{X}}\downarrow \\ 
D^{2}(X) & \overset{D^{2}(j_{9})}{\rightarrow }\text{ }\cdots & \rightarrow
D^{2k+2}(X) & \overset{D^{2}(j_{2k})}{\rightarrow } & D^{2k+4}(X)\rightarrow
& \cdots \text{ }D^{2}(\tilde{X})%
\end{array}%
$

\noindent in $i\mathcal{N}_{F}$ occurs and $D^{2}(i_{2k})j_{2k}=j_{\tilde{X}%
}i_{2k}$. By Lemma 1 and its proof, the canonical direct limit of the direct
sequence $D^{2}[\boldsymbol{\tilde{X}}]\equiv
(D^{2k+2}(X),D^{2}(j_{2k}),\{0\}\cup \mathbb{N})$ is

$\underrightarrow{\lim }D^{2}[\boldsymbol{\tilde{X}}]=(X^{\prime
},i_{2k+2}^{\prime })\equiv ((\sqcup _{k\in \{0\}}D^{2k+2}(X))/\sim ^{\prime
},\left\Vert \cdot \right\Vert ^{\prime }),i_{2k+2}^{\prime })$.

\noindent By Theorem 1 (i), there exists an (isometric) isomorphism

$g:\underrightarrow{\lim }D^{2}[\boldsymbol{\tilde{X}}]=X^{\prime
}\rightarrow D^{2}(\tilde{X})=D^{2}(\underrightarrow{\lim }\boldsymbol{%
\tilde{X}})$.

\noindent We are to prove that $\boldsymbol{\tilde{X}}$ is (in-)isomorphic
to $D^{2}[\boldsymbol{\tilde{X}}]$ in $(\func{seq}$-$i\mathcal{N}_{F})^{1}$.
Since all $j_{2k}$ are the canonical embeddings, Lemma 1 (i) of [16] assures
that all $D^{2}(j_{2k})$ are closed isometric embeddings. Notice that by
excluding (including) $j_{0}$ off $\boldsymbol{\tilde{X}}$ (into $D^{2}[%
\boldsymbol{\tilde{X}}]$) nothing relevant for this consideration is
changing. Let us exclude $j_{0}$ off $\boldsymbol{\tilde{X}}$. By [16],
Corollary 1, for every $k\in \mathbb{N}$, there exist the closed direct-sum
presentations of $D^{2k+2}(X)$, induced by sections $j_{2k}$ and $%
D^{2}(j_{2k-2})$ (having $D(j_{2k-1})$ for a common retraction), with the
same closed complementary subspace. More precisely,

$D^{2k+2}(X)=R(j_{2k})\dotplus N(D(j_{2k-1}))=R(D^{2}(j_{2k-2})\dotplus
N(D(j_{2k-1}))$.

\noindent Therefore, by starting with an isomorphism

$f_{2}:D^{2}(X)\rightarrow D^{2}(X)$, $\left\Vert f_{2}\right\Vert <1$,

\noindent (for instance, $f_{2}=\lambda 1_{D^{2}(X)}$, $0<\lambda <1$), we
may apply Lemma 5 ($X^{\prime }=X^{\prime \prime }=D^{2}(X)$, $Y^{\prime
}=Y^{\prime \prime }=D^{4}(X)$, $i^{\prime }=j_{2}$, $i^{\prime \prime
}=D^{2}(j_{0})$, and $Z^{\prime }=Z^{\prime \prime }=N(D(j_{1}))$ and obtain
an isomorphism

$f_{4}:D^{4}(X)\rightarrow D^{4}(X)$, $\left\Vert f_{4}\right\Vert <1$,

\noindent which extends $f_{2}$. i.e.,

$f_{4}j_{2}=D^{2}(j_{0})f_{2}$.

\noindent Continuing by induction, we obtain a sequence $(f_{2k})$ of
isomorphisms

$f_{2k}:D^{2k}(X)\rightarrow D^{2k}(X)$, $\left\Vert f_{2k}\right\Vert <1$,

\noindent such that

$f_{2k+2}j_{2k}=D^{2}(j_{2k-2})f_{2k}$

\noindent (commutating with the bonding morphisms of $\boldsymbol{\tilde{X}}$
and $D^{2}[\boldsymbol{\tilde{X}}]$). Then the sequence $(f_{2k})$
determines an in-(iso)morphism

$\boldsymbol{f}=[(1_{\mathbb{N}},f_{n})]\in (\func{seq}$-$i\mathcal{N}%
_{F})^{1}(\boldsymbol{\tilde{X}},D^{2}[\boldsymbol{\tilde{X}}])$, $2k\mapsto
k\equiv n$,

\noindent having all $f_{n}$ to be isomorphisms with $\left\Vert
f_{n}\right\Vert <1$. Now, by Lemma 3, the existing limit morphism

$f\equiv \underrightarrow{\lim }\boldsymbol{f}:\underrightarrow{\lim }%
\boldsymbol{\tilde{X}}=\tilde{X}\rightarrow X^{\prime }=\underrightarrow{%
\lim }D^{2}[\boldsymbol{\tilde{X}}]$.

\noindent is an isomorphism. Consequently, the composite $gf$ is an
isomorphism of $\tilde{X}$ onto $D^{2}(\tilde{X})$ (which, in general, is 
\emph{not} the limit morphism $\underrightarrow{\lim }(j_{2k})$!), and
property (i) follows by Lemma 6 (i).

\noindent (ii). By [16], Theorem 1, for every $k\in \mathbb{N}$, the
canonical embedding $j_{2k}$ is a section of $(\mathcal{B}_{F})_{1}$ having
for an appropriate retraction

$D(j_{2k-1}):D^{2k+2}(X)\rightarrow D^{2k}(X)$, $%
D(j_{2k-1})j_{2k}=1_{D^{2k}(X)}$.

\noindent Then the conclusion follows by Lemma 2.

\noindent (iii). This property follows by Theorem 5 of [16]. Namely, if $X$
is finite-dimensional then one may choose $\tilde{X}=X$, while if $\dim
X=\infty $, the\emph{\ normed} dual functor $D$ rises the countable
algebraic dimension ($\aleph _{0}$ to $2^{\aleph _{0}}$) only, and (a
quotient of) a \emph{countable} union in $\mathcal{V}_{F}$ cannot rise an
infinite algebraic dimension.
\end{proof}

\begin{remark}
\label{R1}(a) By Theorem 2 and its proof, the constructed bidualic hyperdual 
$\tilde{X}$ of a normed space $X$ is also a bidualic hyperdual of every even
normed dual space D$^{2k}(X)$, $k\in \{0\}\cup \mathbb{N}$, as well.
Further, by applying the same construction to the direct sequence $%
(D(X)^{2k+1},j_{2k+1},\{0\}\cup \mathbb{N})$, one obtains a bidualic
hyperdual $\widetilde{D(X)}$ of every odd normed dual $D^{2k+1}(X)$ of $X$.

\noindent (b) In the proof of Theorem 2 (i), the application of Lemma 5 has
been essential. If it could hold an appropriate analogue of Lemma 5 for the
isometries (in the very special case of the proof of Theorem 2 (i)), then $%
\tilde{X}$ would be a parareflexive space.
\end{remark}

We now want to extend the direct limit construction $X\mapsto \boldsymbol{%
\tilde{X}}\mapsto \underrightarrow{\lim }\boldsymbol{\tilde{X}}\equiv \tilde{%
X}$ in $i\mathcal{N}_{F}$ to a functor on $\mathcal{N}_{F}$, which is
closely related to all $D^{2k}$ functors.

\begin{theorem}
\label{T3}There exists a covariant functor (the normed \emph{hyperdual
functor})

$\tilde{D}:\mathcal{N}_{F}\rightarrow \mathcal{N}_{F}$, $\quad X\mapsto 
\tilde{D}(X)$, $f\mapsto \tilde{D}(f)$,

\noindent such that $\tilde{D}[\mathcal{N}_{F}]\subseteq \beta \mathcal{B}%
_{F}$, and $\tilde{D}$ does not rise the algebraic dimension but $\aleph
_{0} $ (to $2^{\aleph _{0}}$). Moreover,

\noindent (i) $\tilde{D}$ is faithful;

\noindent (ii) $\tilde{D}(f)$ is an isometry if and only if $f$ is an
isometry;;

\noindent (iii) $\tilde{D}$ is continuous, i.e., it commutes with the direc
limit:

$\tilde{D}(\underrightarrow{\lim }\boldsymbol{X})\cong \underrightarrow{\lim 
}\tilde{D}[\boldsymbol{X}]$ isometrically;

\noindent (iv) $(\forall k\in \{0\}\cup \mathbb{N})$ $\tilde{D}D^{2k}=\tilde{%
D}$;

\noindent (v) $(\forall k\in \{0\}\cup \mathbb{N})(\forall X\in Ob(\mathcal{N%
}_{F}))$ $D^{2k}\tilde{D}(X)\cong \tilde{D}(X)$;

\noindent (vi) for each $k\in \{0\}\cup \mathbb{N})$, there exist an
isometric natural transformation $\iota ^{2k}:D^{2k}\rightsquigarrow \tilde{D%
}$ of the functors;

\noindent (vii) $\tilde{D}[\pi \mathcal{N}_{F}]\subseteq \pi \mathcal{N}_{F}$%
, $\tilde{D}[\chi \mathcal{N}_{F}]\subseteq \chi \mathcal{B}_{F}$, $\tilde{D}%
[\chi _{n}\mathcal{N}_{F}]\subseteq \chi _{n}\mathcal{B}_{F}$, $\tilde{D}%
[\rho \mathcal{N}_{F}]\subseteq \rho \mathcal{B}_{F}$.
\end{theorem}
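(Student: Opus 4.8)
The plan is to manufacture $\tilde{D}$ from the canonical hyperdual of Theorem 2 and then read off (i)--(vii) from the limit construction, the only genuinely delicate point being the preservation of parareflexivity. On objects I set $\tilde{D}(X)=\underrightarrow{\lim}\boldsymbol{\tilde{X}}$, the canonical limit of $\boldsymbol{\tilde{X}}=(D^{2k}(X),j_{2k},\{0\}\cup\mathbb{N})$; Theorem 2 gives $\tilde{D}(X)\in\beta\mathcal{B}_{F}$ and the dimension clause. For $f\in\mathcal{N}_{F}(X,Y)$ the naturality of each $j^{2k}:D^{2k}\rightsquigarrow D^{2k+2}$ yields $j_{2k}^{Y}D^{2k}(f)=D^{2k+2}(f)j_{2k}^{X}$, so $(D^{2k}(f))_{k}$ is an in-morphism $\boldsymbol{\tilde{X}}\to\boldsymbol{\tilde{Y}}$, and I define $\tilde{D}(f)$ by the pointwise formula of Lemma 1, $\tilde{D}(f)([\xi])=i_{2k}^{Y}(D^{2k}(f)(\xi))$ for $\xi\in D^{2k}(X)$. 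This is well defined by naturality of $j$ and the cocone identities $i_{2k+2}^{Y}j_{2k}^{Y}=i_{2k}^{Y}$; although $f$ need not be contractive, $\|D^{2k}(f)\xi\|\le\|f\|\|\xi\|$ gives $\|\tilde{D}(f)\|\le\|f\|$. Functoriality follows from that of the $D^{2k}$ together with the limit formula, exactly as in Lemma 1.

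The numbered clauses then fall out. The isometries $\iota^{2k}_{X}:=i_{2k}$ satisfy $\iota^{2k}_{Y}D^{2k}(f)=\tilde{D}(f)\iota^{2k}_{X}$ by construction, which is the naturality square of (vi); taking $k=0$ gives $\tilde{D}(f)\iota^{0}_{X}=\iota^{0}_{Y}f$ with $\iota^{0}$ isometric, whence $\|\tilde{D}(f)\|=\|f\|$, the faithfulness (i), and the equivalence (ii) (one direction by restricting to $R(\iota^{0}_{X})$, the other because $D^{2k}$ preserves isometries so Lemma 4 makes $\underrightarrow{\lim}(D^{2k}f)$ an isometry). Clause (iv) is tail-cofinality: $\boldsymbol{\widetilde{D^{2k}(X)}}$ is the $k$-shifted tail of $\boldsymbol{\tilde{X}}$, so its canonical limit is $\tilde{D}(X)$, and likewise on morphisms. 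Clause (v) is Theorem 2(i) with Lemma 6(i). For continuity (iii) I interchange two directed colimits, using Theorem 1(i) that each $D^{2k}$ commutes with direct limits:
\[\tilde{D}(\underrightarrow{\lim}_{n}X_{n})\cong\underrightarrow{\lim}_{k}\underrightarrow{\lim}_{n}D^{2k}(X_{n})\cong\underrightarrow{\lim}_{n}\underrightarrow{\lim}_{k}D^{2k}(X_{n})\cong\underrightarrow{\lim}_{n}\tilde{D}(X_{n}),\]
the middle isomorphism being justified by checking that both iterated limits satisfy the universal property of Lemma 1 relative to $(\mathcal{N}_{F})^{1}$.

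It remains to prove (vii). If $X$ is reflexive every $j_{2k}$ is an isometric isomorphism, so all bonding maps of $\boldsymbol{\tilde{X}}$ are isomorphisms and $\tilde{D}(X)\cong X$ by Lemma 3; thus $\rho\mathcal{N}_{F}$ is preserved. For quasi-reflexivity I compute the defect of $\tilde{D}(X)$. By Theorem 1(i), $D^{2}(\tilde{D}(X))\cong\underrightarrow{\lim}_{k}(D^{2k+2}(X),D^{2}(j_{2k}))$; naturality of $j$ gives $D^{2}(j_{2k})j_{2k}=j_{2k+2}j_{2k}$, so $D^{2}(j_{2k})$ agrees with $j_{2k+2}$ on $R(j_{2k})$ and carries it into $R(j_{2k+2})$. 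Hence $(R(j_{2k}))_{k}$ is a subsystem with limit $R(j_{\tilde{D}(X)})$, and by exactness of directed colimits
\[D^{2}(\tilde{D}(X))/R(j_{\tilde{D}(X)})\cong\underrightarrow{\lim}_{k}\bigl(D^{2k+2}(X)/R(j_{2k})\bigr).\]
If $X$ is quasi-reflexive of order $n$ each quotient is $n$-dimensional, and the induced bonding is (up to sign) the restriction of $j_{2k+2}-D^{2}(j_{2k})$; since by [16, Corollary 1] both sections $j_{2k+2},D^{2}(j_{2k})$ share the retraction $D(j_{2k+1})$ and the complement $N(D(j_{2k+1}))$, this map is an isomorphism of the $n$-dimensional defects. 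The limit is therefore $n$-dimensional, giving $\chi_{n}$- and $\chi$-preservation.

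The hard part is parareflexivity, and I expect it to be the main obstacle. Using Lemma 6(i), $X\cong D^{2}(X)$ isometrically via some $\theta$ forces $D^{2k}(X)\cong X$ for all $k$; setting $\Phi_{k}=D^{2k-2}(\theta)\cdots\theta$ and $a:=\Phi_{k+1}^{-1}j_{2k}\Phi_{k}$, the naturality $j_{2k+2}D^{2k}(\theta)=D^{2k+2}(\theta)j_{2k}$ shows $a$ is independent of $k$ and equals $\theta^{-1}j_{X}$, so $\tilde{D}(X)\cong\underrightarrow{\lim}(X,a)$, a limit of a constant sequence along the single isometry $a$. The same bookkeeping presents $D^{2}(\tilde{D}(X))$ as $\underrightarrow{\lim}(X,b)$ with $b=\theta^{-1}D^{2}(a)\theta$, and one verifies $ba=a^{2}$ (so $b=a$ on $R(a)$) while $R(a)$ and $R(b)$ share the common defect $N(D(j_{1}))$, giving $\operatorname{coker}(a)\cong\operatorname{coker}(b)$ isometrically. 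Parareflexivity of $\tilde{D}(X)$ is then exactly the statement $\underrightarrow{\lim}(X,a)\cong\underrightarrow{\lim}(X,b)$ \emph{isometrically}. This is the crux: the intertwiner $s=a$ gives only an isometric \emph{embedding} $\tilde{D}(X)\hookrightarrow D^{2}(\tilde{D}(X))$, and $a,b$ are in general neither equal nor isometrically conjugate — precisely the isometric gap flagged in Remark 1(b), which blocks the merely contractive Lemma 5. I would close it by exploiting the isometric absorption $X\cong X\dotplus N(D(j_{1}))$ supplied by parareflexivity to construct, stage by stage, an isometric in-isomorphism between the two constant sequences (an isometric refinement of the Lemma 5 extension, now available because every term is the single space $X$ and the two complements both coincide with $N(D(j_{1}))$), and then pass to the limit by Lemma 3; reflexivity is the degenerate empty-defect case of this argument.
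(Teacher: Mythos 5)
Your construction of $\tilde{D}$ on objects and morphisms, the norm estimate $\Vert\tilde{D}(f)\Vert\leq\Vert f\Vert$, and your treatment of the headline claims and of (i), (ii), (iv), (v), (vi) coincide with the paper's proof. Your (iii) (interchange of the two sequential colimits, using Theorem 1(i) for each $D^{2k}$) and your computation of the quasi-reflexivity defect as $\underrightarrow{\lim}_{k}\bigl(D^{2k+2}(X)/R(j_{2k})\bigr)$ are mild variants of the paper's arguments (the paper instead re-runs the proof of Lemma 4 with $\tilde{D}$ in place of $D^{2k}$, and invokes exactness of $D^{2}$ together with property (iv) for the defect); both variants are reasonable and buy nothing essentially new.

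The genuine gap is the parareflexivity clause of (vii), which you flag yourself and do not close. Your reduction is correct (the conjugated bondings $a=\theta^{-1}j_{X}$ and $b=\theta^{-1}D^{2}(a)\theta$, the identity $ba=a^{2}$, the common complement), but the proposed repair --- an ``isometric refinement of the Lemma 5 extension,'' claimed to be available because both sections share the complement $N(D(j_{1}))$ --- is unsubstantiated and is exactly the step that fails in general: a closed direct-sum presentation $Y=R(i)\dotplus Z$ is only topological, the norm of $Y$ is not determined by the norms of the summands, so a map defined piecewise on $R(a)$ and on the common complement is (via the Inverse Mapping Theorem) an isomorphism but carries no norm control; this is precisely the obstruction recorded in Remark 1(b), and it does not evaporate because every term is the single space $X$. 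Consequently the crucial isometric in-isomorphism between your two constant sequences, hence the isometry $\tilde{D}(X)\cong D^{2}(\tilde{D}(X))$, is not actually produced. For comparison, the paper's proof of this clause uses no extension lemma at all: from an isometric isomorphism $f:X\rightarrow D^{2}(X)$ it forms the in-morphism $[(1,D^{2k}(f))]:\boldsymbol{\tilde{X}}\rightarrow D^{2}[\boldsymbol{\tilde{X}}]$, all of whose terms are isometric isomorphisms, and applies Lemma 1 (and Theorem 1(i)) to get an isometric isomorphism onto $D^{2}(\tilde{D}(X))$. Note, however, that the commutativity this requires, $D^{2k+2}(f)\,j_{2k}=D^{2}(j_{2k})\,D^{2k}(f)$, reduces by naturality of $j$ (and surjectivity of $D^{2k}(f)$) to $j_{2k+2}=D^{2}(j_{2k})$ --- exactly the discrepancy between your $a$ and $b$ --- so the subtlety you isolate is real and is passed over as ``readily obtained'' in the paper; but your proposal neither adopts that route nor substantiates its own, so clause (vii) for $\pi\mathcal{N}_{F}$ remains unproven in your write-up.
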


\begin{proof}
According to Theorem 2, $\tilde{D}$ is well defined on the object class $Ob(%
\mathcal{N}_{F})$ by putting $\tilde{D}(X)=\tilde{X}$, where $\tilde{X}$ is
the object of the canonical direct limit $\underrightarrow{\lim }\boldsymbol{%
\tilde{X}}=(\tilde{X},i_{2k})$, and $\boldsymbol{\tilde{X}}%
=(D^{2k}(X),j_{D^{2k}(X)},\{0\}\cup \mathbb{N})$. Let $f\in \mathcal{N}%
_{F}(X,Y)$. Since, for every $k\in \{0\}\cup \mathbb{N}$, $%
j^{2k}:D^{2k}\rightsquigarrow D^{2}(D^{2k})=D^{k+2}$ is a natural
transformation of the covariant functors, the following diagram

\noindent $%
\begin{array}{cccccc}
X & \overset{j_{X}}{\rightarrow }\text{ }\cdots & \hookrightarrow D^{2k}(X)
& \overset{j_{D^{2k}(X)}}{\rightarrow } & D^{2k+2}(X)\rightarrow & \cdots 
\text{ }\tilde{X} \\ 
f\downarrow & \cdots & D^{2k}(f)\downarrow &  & \downarrow D^{2k+2}(f)\text{%
\quad }\cdots &  \\ 
Y & \overset{j_{Y}}{\rightarrow }\text{ }\cdots & \rightarrow D^{2k}(Y) & 
\overset{j_{D^{2k}(Y)}}{\rightarrow } & D^{2k+2}(Y)\rightarrow & \cdots 
\text{ }\tilde{Y}%
\end{array}%
$

\noindent in $\mathcal{N}_{F}$ commutes. Then the equivalence class $%
[(1_{\{0\}\cup \mathbb{N}},D^{2k}(f))]$ of $(1_{\{0\}\cup \mathbb{N}%
},D^{2k}(f))$ is an in-morphism $\boldsymbol{\tilde{f}}:\boldsymbol{\tilde{X}%
}\rightarrow \boldsymbol{\tilde{Y}}$ of the direct sequences in $i\mathcal{N}%
_{F}$. If $f$ belongs to $(\mathcal{N}_{F})^{1}$, then $\boldsymbol{\tilde{f}%
}\in (\func{seq}$-$i\mathcal{N}_{F})^{1}$, and $\underrightarrow{\lim }%
\boldsymbol{\tilde{f}}$ exists by Lemma 1. In general, we have to construct
(in this special case of direct sequences in $i\mathcal{N}_{F}$) a limit
morphism

$\tilde{f}:\tilde{X}=\underrightarrow{\lim }\boldsymbol{\tilde{X}}%
\rightarrow \underrightarrow{\lim }\boldsymbol{\tilde{Y}}=\tilde{Y}$

\noindent explicitly. Given an $\tilde{x}=[x_{2k(\tilde{x})}]\in \tilde{X}$,
put

$\tilde{f}(\tilde{x})=i_{2k(\tilde{x}),Y}D^{2k(\tilde{x})}(f)(x_{2k(\tilde{x}%
)})$,

\noindent where $x_{2k(\tilde{x})}\in D^{2k(\tilde{x})}(X)$ is the grain of $%
\tilde{x}$. Then $\tilde{f}$ is a well defined linear function. Furthermore, 
$\tilde{f}$ is continuous because, for every $\tilde{x}\in \tilde{X}$,

$\left\Vert \tilde{f}(\tilde{x})\right\Vert \leq \left\Vert f\right\Vert
\cdot \left\Vert \tilde{x}\right\Vert $

\noindent holds. Indeed, each $\tilde{x}\in \tilde{X}$ has a unique grain $%
x_{2k(\tilde{x})}\in D^{2k(\tilde{x})}(X)$ (and conversely), and thus (by
definitions of the norms on $\tilde{X}$ and $\tilde{Y}$), it follows (recall
that the elements of the terms are continuous functionals) that

$\left\Vert \tilde{f}(\tilde{x})\right\Vert =\left\Vert i_{2k(\tilde{x}%
),Y}D^{2k(x)}(f)(x_{2k(\tilde{x})})\right\Vert =\left\Vert D^{2k(\tilde{x}%
)}(f)(x_{2k(\tilde{x})})\right\Vert _{2k(\tilde{x}),Y}=$ \smallskip

$=\left\{ 
\begin{array}{c}
\left\Vert f(x_{0})\right\Vert _{Y}\text{, }\quad k(\tilde{x})=0 \\ 
\left\Vert x_{2k(\tilde{x})}D^{2k(\tilde{x})-1}(f)\right\Vert _{2k(\tilde{x}%
),Y}\text{, }k(\tilde{x})\in \mathbb{N}%
\end{array}%
\right. \leq $ \smallskip

$\leq \left\{ 
\begin{array}{c}
\left\Vert f\right\Vert \cdot \left\Vert x_{0}\right\Vert _{X}\text{, }\quad
k(\tilde{x})=0 \\ 
\left\Vert x_{2k(\tilde{x})}\right\Vert _{2k(\tilde{x})}\cdot \left\Vert
D^{2k(\tilde{x})-1}(f)\right\Vert \text{, }k(\tilde{x})\in \mathbb{N}%
\end{array}%
\right. =$ \smallskip

$=\left\{ 
\begin{array}{c}
\left\Vert f\right\Vert \cdot \left\Vert x_{0}\right\Vert _{X}\text{, }\quad
k(\tilde{x})=0 \\ 
\left\Vert x_{2k(\tilde{x})}\right\Vert _{2k(\tilde{x})}\cdot \left\Vert
f\right\Vert \text{, }k(\tilde{x})\in \mathbb{N}%
\end{array}%
\right. =$ \smallskip

$=\left\Vert f\right\Vert \cdot \left\Vert \tilde{x}\right\Vert $.

\noindent Moreover, if $\left\Vert f\right\Vert \leq 1$, then $\left\Vert 
\tilde{f}\right\Vert \leq 1$. Further, since $\tilde{f}%
i_{2k,X}=i_{2k,Y}D^{2k}(f)$ (by the very definition), it follows that $%
\tilde{f}$ is an isometry whenever $f$ is an isometry. We finally define

$\tilde{D}(f)\equiv \tilde{f}:\tilde{X}\equiv \tilde{D}(X)\rightarrow \tilde{%
D}(Y)\equiv \tilde{Y}$.

\noindent Then $\tilde{D}(1_{X})=1_{\tilde{D}(X)}$ obviously holds. Further,
given an $f\in \mathcal{N}_{F}(X,Y)$ and a $g\in \mathcal{N}_{F}(Y,Z)$,
then, since each $D^{2k}$ is a covariant functor, the definition from above
(see also the diagram) implies that

$\tilde{D}(gf)=\widetilde{gf}=\tilde{g}\tilde{f}=\tilde{D}(g)\tilde{D}(f)$.

\noindent Therefore, $\tilde{D}:N_{F}\rightarrow N_{F}$ is a covariant
functor. By Theorem 2 (i), $\tilde{X}$ is a bidualic Banach space, hence, $%
\tilde{D}[\mathcal{N}_{F}]\subseteq \beta \mathcal{B}_{F}$, while Theorem 2
(iii) assures the statement about algebraic dimension. Let us now verify the
additional properties.

\noindent (i). Let $\tilde{D}(f)=\tilde{D}(f^{\prime }):\tilde{D}%
(X)\rightarrow \tilde{D}(Y)$. Assume to the contrary, i.e., that $f\neq
f^{\prime }$. Then there is an $x\in X$ such that $f(x)\neq f^{\prime }(x)$.
Since $i_{1X}$ and $i_{1Y}$ are monomorphism, it follows that

$\tilde{D}(f)i_{1X}(x)=i_{1Y}f(x)\neq i_{1Y}f^{\prime }(x)=\tilde{D}%
(f^{\prime })i_{1X}(x)$,

\noindent implying that $\tilde{D}(f)(\tilde{x})\neq \tilde{D}(f^{\prime })(%
\tilde{x})$ - a contradiction.

\noindent (ii). It suffices to verify the sufficiency. Let $\tilde{D}(f):%
\tilde{D}(X)\rightarrow \tilde{D}(Y)$ be an isometry. Since $i_{1X}$ and $%
i_{1Y}$ are isometries, it follows that, for every $x\in X$,

$\left\Vert f(x)\right\Vert =\left\Vert i_{1Y}(f(x))\right\Vert =\left\Vert 
\tilde{D}(f)(i_{1X}(x)\right\Vert =\left\Vert i_{1X}(x)\right\Vert
=\left\Vert x\right\Vert $.

\noindent (iii). Since, by (i) and (ii), $\tilde{D}$ is faithful and
preserves isometries, we may apply the proof of Lemma 4 (for $D^{2k}$) to $%
\tilde{D}$ as well, and the statement follows.

\noindent (iv). The equality $\tilde{D}D^{2k}=\tilde{D}$, $k\in \{0\}\cup 
\mathbb{N}$, follows by the definition of $\tilde{D}$. Namely, in the
(defining) direct sequence $\boldsymbol{\tilde{X}}$ for $\tilde{D}(X)=\tilde{%
X}$ one may drop any initial part obtaining the same direct limit space. The
same argument keeps valid for an $f\in \mathcal{N}_{F}(X,Y)$, i.e.,

$\tilde{D}(f)=\tilde{D}(D^{2k}(f))\in \mathcal{N}_{F}(\tilde{D}D^{2k}(X),%
\tilde{D}D^{2k}(Y))=\mathcal{N}_{F}(\tilde{D}(X),\tilde{D}(Y))$.

\noindent (v). This property is a consequence of $\tilde{D}[\mathcal{N}%
_{F}]\subseteq \beta \mathcal{N}_{F}$ and [16], Lemma 1 (i), i.e., the
inductive consequence of $D^{2}\tilde{D}(X)\cong \tilde{D}(X)$.

\noindent (vi). Observe that, for given $X,Y\in OB(\mathcal{N}_{F})$ and
each $k\in \{0\}\cup \mathbb{N}$, the relation

$\tilde{D}(f)i_{2k,X}=i_{2k,Y}D^{2k}(f)$

\noindent follows straightforwardly by the construction of $\tilde{X}$ and $%
\tilde{Y}$ and by the definition of $\tilde{f}$. Hence, for each $k\in
\{0\}\cup \mathbb{N}$, the class

$\{i_{2k,X}:D^{2k}(X)\rightarrow \tilde{X}\mid X\in Ob(\mathcal{N}_{F})\}$

\noindent of the corresponding limit morphisms (each one of them is an
isometry) determines an isometric natural transformation $\iota
^{2k}:D^{2k}\rightsquigarrow \tilde{D}$ of the functors.

\noindent (vii) Let $X$ be a parareflexive space, i.e., $X\in Ob(\pi 
\mathcal{N}_{F})$. Then there exists an isometric isomorphism $%
f:X\rightarrow D^{2}(X)$. By applying $D^{2}$ to $\boldsymbol{\tilde{X}}$
and $\underrightarrow{\lim }\boldsymbol{\tilde{X}}=\tilde{D}(X)$, one
readily obtains an in-morphism

$\boldsymbol{f}=[(1_{\mathbb{N}},f_{n})]:\boldsymbol{\tilde{X}}\rightarrow
D^{2}[\boldsymbol{\tilde{X}}]$, $f_{1}=f$, $f_{n}=D^{2}(f)$,

\noindent with all the $f_{n}$ isometric isomorphisms. Then, by Lemma 1,

$\underrightarrow{\lim }\boldsymbol{f}:\tilde{D}(X)\rightarrow D^{2}(\tilde{D%
}(X))$

\noindent is an isometric isomorphism, and thus, $\tilde{D}(X)\in Ob(\pi 
\mathcal{N}_{F})$. Let $X$ be a quasi-reflexive space of order $n\in
\{0\}\cup \mathbb{N}$, i.e., $X\in Ob(\chi _{n}\mathcal{N}_{F})$. Then $%
D^{2}(X)\cong R(j_{X})\dotplus F^{n}$ in $\mathcal{B}_{F}$. Since the
functors $D^{2k}$ are exact ([6], Proposition 6. 5. 20, or [16], Lemma 3]),
the construction of $\tilde{D}(X)$ and property (iv) straightforwardly imply
that

$D^{2}(\tilde{D}(X))\cong R(j_{\tilde{D}(X)})\dotplus F^{n}$.

\noindent Consequently, $\tilde{D}[\chi _{n}\mathcal{N}_{F}]\subseteq \chi
_{n}\mathcal{B}_{F}$, $\tilde{D}[\chi \mathcal{N}_{F}]\subseteq \chi 
\mathcal{B}_{F}$ and $\tilde{D}[\rho \mathcal{N}_{F}]\subseteq \rho \mathcal{%
B}_{F}$ (the case $n=0$). This completes the proof of the theorem.
\end{proof}

Concerning the somewhat reflexivity and, posteriori, the almost reflexivity
of $\tilde{D}(X)$, we have established the following characterizations.

\begin{theorem}
\label{T4}For every normed space $X$, $\tilde{D}(X)$ is somewhat reflexive
if and only if, for every $n\in \mathbb{N}$, $D^{2n}(X)$ is somewhat
reflexive. Consequently, for every parareflexive space $X$, the following
properties are equivalent:

\noindent (i) $\tilde{D}(X)$ is almost reflexive;

\noindent (ii) $\tilde{D}(X)$ is somewhat reflexive;

\noindent (iii) $(\forall n\in \mathbb{N})$ $D^{2n}(X)$ is somewhat
reflexive.
\end{theorem}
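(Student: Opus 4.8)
The plan is to reduce everything to the canonical hyperdual $\tilde{D}(X)=\tilde{X}=\underrightarrow{\lim}\boldsymbol{\tilde{X}}$ and to exploit the structural description of $\tilde{X}$ obtained in Theorem 2. Write $V_{n}:=i_{2n}[D^{2n}(X)]\trianglelefteq\tilde{X}$ for the isometric copy of $D^{2n}(X)$ sitting inside $\tilde{X}$. By Theorem 2 (ii) each $V_{n}$ is a closed (indeed complemented) subspace of $\tilde{X}$ and $V_{0}\subseteq V_{1}\subseteq\cdots$ is increasing; by the grain description of the canonical direct limit in Lemma 1, every element of $\tilde{X}$ has a grain in some $D^{2n}(X)$, so that $\tilde{X}=\bigcup_{n}V_{n}$ as a set. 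I would use only these facts together with the elementary observation that a reflexive space, being complete, is closed wherever it sits.

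For the necessity ($\Rightarrow$), suppose $\tilde{X}$ is somewhat reflexive. For each $n$ the subspace $V_{n}\cong D^{2n}(X)$ is closed in $\tilde{X}$, and somewhat reflexivity passes trivially to closed subspaces: any infinite-dimensional closed $W\trianglelefteq V_{n}$ is also an infinite-dimensional closed subspace of $\tilde{X}$, hence contains an infinite-dimensional reflexive closed subspace. Thus every $D^{2n}(X)$ is somewhat reflexive.

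For the sufficiency ($\Leftarrow$), assume every $D^{2n}(X)$ is somewhat reflexive and let $W\trianglelefteq\tilde{X}$ be an arbitrary infinite-dimensional closed subspace; being closed in the Banach space $\tilde{X}$, the space $W$ is itself Banach. The decisive step is to locate a single level that $W$ meets in infinite dimension. Since $W\subseteq\tilde{X}=\bigcup_{n}V_{n}$, we have $W=\bigcup_{n}(W\cap V_{n})$, an increasing union of closed subspaces. If each $W\cap V_{n}$ were finite-dimensional, then $W$ would be a countable increasing union of finite-dimensional (hence closed, nowhere dense) subspaces, contradicting the Baire category theorem for the infinite-dimensional Banach space $W$. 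Hence $W\cap V_{n_{0}}$ is infinite-dimensional for some $n_{0}$. Now $W\cap V_{n_{0}}$ is an infinite-dimensional closed subspace of $V_{n_{0}}\cong D^{2n_{0}}(X)$, which is somewhat reflexive, so it contains an infinite-dimensional reflexive closed subspace $Z$. As $Z$ is reflexive it is complete, hence closed in $\tilde{X}$, and $Z\subseteq W$; this exhibits the required reflexive subspace and proves $\tilde{X}$ somewhat reflexive.

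The main obstacle is precisely the \emph{transversality} worry in the sufficiency direction: a priori an infinite-dimensional $W$ could fail to meet any finite level $V_{n}$ in infinite dimension, and density of $\bigcup_{n}V_{n}$ alone would not preclude this. What rescues the argument is that the grain construction of Lemma 1 makes the union exhaust $\tilde{X}$ on the nose (not merely densely), so the Baire/Hamel-dimension argument applies directly to $W$ rather than to the ambient space. Finally, the \emph{Consequently} part is immediate: for a parareflexive $X$, Theorem 3 (vii) gives $\tilde{D}(X)\in Ob(\pi\mathcal{N}_{F})$, i.e.\ $\tilde{D}(X)$ is again parareflexive, so by Definition 2 the almost reflexivity of $\tilde{D}(X)$ reduces to its somewhat reflexivity, yielding (i)$\Leftrightarrow$(ii); and (ii)$\Leftrightarrow$(iii) is exactly the equivalence just established.
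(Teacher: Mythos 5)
Your proof is correct and follows essentially the same route as the paper's: reduce to the canonical limit, note that the isometric copies $R(i_{2n})\cong D^{2n}(X)$ are closed and exhaust $\tilde{X}$, locate a level meeting $W$ in infinite dimension, and transport a reflexive subspace back and forth along the isometry $i_{2n_0}$; the ``Consequently'' part is handled identically via Theorem 3 (vii). The only (cosmetic) difference is that you justify the existence of $n_0$ with $W\cap V_{n_0}$ infinite-dimensional by a direct Baire category argument, whereas the paper invokes $\dim W\geq 2^{\aleph_0}$ versus the countable dimension of a countable union of finite-dimensional subspaces --- your version is arguably cleaner since it avoids the paper's unnecessary appeal to $CH$.
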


\begin{proof}
Let $X$ be a normed space such that $\tilde{D}(X)$ is somewhat reflexive.
Let an $n\in \mathbb{N}$ be given. Notice that $D^{2n}(X)$ is somewhat
reflexive if and only if $R(i_{2n})\trianglelefteq \tilde{D}(X)$ is somewhat
reflexive, where $i_{2n}:D^{2n}(X)\rightarrow \tilde{D}(X)$ is the
(isometric) limit morphism. If $R(i_{2n})$ is finite-dimensional, there is
nothing to prove. If $R(i_{2n})$ is infinite-dimensional, then the
conclusion follows because it is a closed subspace of $\tilde{D}(X)$.
Conversely, let $X$ be a normed space such that, for every $n\in \mathbb{N}$%
, $D^{2n}(X)$ is somewhat reflexive. If $\tilde{D}(X)$ is
finite-dimensional, then there is nothing to prove. Let $\tilde{D}(X)$ be
infinite-dimensional. Since $\tilde{D}(X)$ is a Banach space, it follows
that $\dim \tilde{D}(X)\geq 2^{\aleph _{9}}$ ($CH$ accepted). Let $%
W\trianglelefteq \tilde{D}(X)$ be a closed infinite-dimensional subspace.
Then $W$ is a Banach space and $\dim W\geq 2^{\aleph _{0}}$. Denote

$W_{2n}\equiv W\cap R(i_{2n})\trianglelefteq \tilde{D}(X)$, $n\in \{0\}\cup 
\mathbb{N}$.

\noindent Then every $W_{2n}$ is a closed subspace of $W$, hence, a Banach
space. Observe that there exists an $n_{0}\in \{0\}\cup \mathbb{N}$ such
that $W_{2n_{0}}$ is infinite-dimensional. Indeed, if all $W_{2n}$ were
finite-dimensional, then $W$ would be at most countably infinite-dimensional
- a contradiction. Let $W_{2n}^{\prime }\trianglelefteq D^{2n}(X)$ be the
inverse image of $W_{2n}$ by $i_{2n}$, i.e., $i_{2n}[W_{2n}^{\prime
}]=W_{2n} $. Since all $i_{2n}$ are isometries of Banach spaces, it follows
that every $W_{2n}^{\prime }$ is a closed subspaces of $D^{2n}(X)$ and $\dim
W_{2n}^{\prime }=\dim W_{2n}$. Especially, $W_{2n_{0}}^{\prime }$ is a
closed infinite-dimensional subspace of $D^{2n_{0}}(X)$. Since $%
D^{2n_{0}}(X) $ is somewhat reflexive, there exists a closed
infinite-dimensional subspace $Z_{2n_{0}}\trianglelefteq W_{2n_{0}}^{\prime
} $ that is reflexive. Since $i_{2n_{0}}$ is an isometry, the subspace $%
Z\equiv i_{2n_{0}}[Z_{2n_{0}}]\trianglelefteq W_{2n_{0}}$ is closed,
infinite-dimensional and reflexive. Consequently, $Z$ is closed,
infinite-dimensional and reflexive subspace of $W$, hence, $\tilde{D}(X)$ is
somewhat reflexive. Then the characterizations of the almost reflexivity
follows by $\tilde{D}[\pi \mathcal{N}_{F}]\subseteq \pi \mathcal{N}_{F}$ of
Theorem 3 (vii).
\end{proof}

By Theorem 4, $\tilde{D}(X)$ cannot be extended towards the almost
reflexivity. Observe that Theorem 3 (vii) can be slightly refined as follows.

\begin{corollary}
\label{C1}let $\mathcal{C}\in \{\chi _{n}\mathcal{N}_{F},\chi \mathcal{N}%
_{F},\beta \mathcal{N}_{F},\pi \mathcal{N}_{F},\rho \mathcal{N}_{F}\}$. Then
the restrictions $\tilde{D}:\mathcal{C}\rightarrow \mathcal{C}$, $\tilde{D}:%
\mathcal{C}_{1}\rightarrow \mathcal{C}_{1}$ and $\tilde{D}:i\mathcal{C}%
\rightarrow i\mathcal{C}$ are covariant functors retaining all the
properties of $\tilde{D}:\mathcal{N}_{F}\rightarrow \mathcal{N}_{F}$.
Furthermore, the restriction functor

$\tilde{D}:\rho \mathcal{N}_{F}\rightarrow \rho \mathcal{N}_{F}$

\noindent is naturally isometrically isomorphic to the identity functor

$1_{\rho \mathcal{N}_{F}}:\rho \mathcal{N}_{F}\rightarrow \rho \mathcal{N}%
_{F}$.
\end{corollary}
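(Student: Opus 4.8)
The plan is to treat the functoriality and the inheritance of properties by bookkeeping from Theorem~3, and then to devote the real work to the last assertion, namely that $\tilde{D}$ restricted to $\rho\mathcal{N}_F$ is naturally isometrically isomorphic to the identity.

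For the first part I would verify well-definedness on objects and on morphisms and then observe that (i)--(vii) descend automatically. On objects, Theorem~3 gives $\tilde{D}[\mathcal{N}_F]\subseteq\beta\mathcal{B}_F$, and part (vii) gives the inclusions $\tilde{D}[\pi\mathcal{N}_F]\subseteq\pi\mathcal{N}_F$, $\tilde{D}[\chi\mathcal{N}_F]\subseteq\chi\mathcal{B}_F$, $\tilde{D}[\chi_n\mathcal{N}_F]\subseteq\chi_n\mathcal{B}_F$ and $\tilde{D}[\rho\mathcal{N}_F]\subseteq\rho\mathcal{B}_F$; since $\mathcal{B}_F\subseteq\mathcal{N}_F$, in each listed case $\tilde{D}$ sends an object of $\mathcal{C}$ back into $\mathcal{C}$. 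On morphisms, the norm estimate $\Vert\tilde{D}(f)\Vert\le\Vert f\Vert$ established in the proof of Theorem~3 shows that a morphism of norm $\le 1$ is carried to one of norm $\le 1$, which makes the restriction to $\mathcal{C}_1$ well defined, while Theorem~3~(ii) shows that isometries are carried to isometries, which handles $i\mathcal{C}$. Because each $\mathcal{C}$ is full in $\mathcal{N}_F$ and $\mathcal{C}_1$, $i\mathcal{C}$ differ from it only by a norm restriction on morphisms, the functor axioms and properties (i)--(vii), each of which was proved for an arbitrary object and an arbitrary morphism, hold verbatim on the restricted domains.

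For the naturality statement the key observation is that on reflexive spaces the defining direct sequence degenerates. Let $X\in Ob(\rho\mathcal{N}_F)$. Since $X$ is reflexive if and only if every iterated dual $D^{n}(X)$ is reflexive (a well-known fact recalled above), every $D^{2k}(X)$ is reflexive, so each canonical embedding $j_{2k}=j_{D^{2k}(X)}\colon D^{2k}(X)\to D^{2k+2}(X)$ is not merely an isometry but an isometric isomorphism. Hence in the sequence $\boldsymbol{\tilde{X}}=(D^{2k}(X),j_{2k},\{0\}\cup\mathbb{N})$ every bonding morphism is invertible. Inspecting the canonical limit of Lemma~1, the grain of any element of $\tilde{D}(X)$ must then lie in the initial term $D^{0}(X)=X$: for $k\ge 1$ the surjectivity of $j_{2k-2}$ leaves no element of $D^{2k}(X)$ outside the image of the preceding bonding map, so no grain can occur at a positive index. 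Consequently the limit morphism $i_{0,X}\colon X\to\tilde{D}(X)$, which is always an isometry by Lemma~1, is also surjective, hence an isometric isomorphism.

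It remains to promote this pointwise family of isomorphisms to a natural transformation, and here nothing new is needed: $i_{0,X}$ is precisely the component at $X$ of the isometric natural transformation $\iota^{0}\colon 1_{\mathcal{N}_F}=D^{0}\rightsquigarrow\tilde{D}$ furnished by Theorem~3~(vi), whose naturality square $\tilde{D}(f)\,i_{0,X}=i_{0,Y}\,f$ was verified there. Restricting $\iota^{0}$ to $\rho\mathcal{N}_F$ therefore yields a natural transformation all of whose components are isometric isomorphisms, i.e. a natural isometric isomorphism between $1_{\rho\mathcal{N}_F}$ and $\tilde{D}|_{\rho\mathcal{N}_F}$. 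I expect the single point that needs genuine care---the main obstacle---to be the surjectivity of $i_{0,X}$, i.e. the claim that reflexivity of $X$ forces every grain into the initial term; once the bonding maps are known to be invertible this is immediate from the construction of the canonical direct limit, and the naturality then comes for free from Theorem~3~(vi).
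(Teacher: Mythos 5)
Your proposal is correct and follows essentially the same route as the paper: reflexivity of $X$ makes every canonical embedding $j_{D^{2k}(X)}$ an isometric isomorphism, hence the limit morphism $i_{0,X}$ is an isometric isomorphism (your grain argument just spells out what the paper asserts directly), and the natural transformation with components $i_{0,X}$ — already available from Theorem~3~(vi) — restricts to the desired natural isometric isomorphism on $\rho\mathcal{N}_{F}$, while the first part is the same bookkeeping from Theorem~3 that the paper treats as not requiring proof.
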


\begin{proof}
The last statement only asks for a proof. Let $X$ and $Y$ be reflexive
spaces, i.e., $X,Y\in Ob(\rho \mathcal{N}_{F})$. Then the canonical
embeddings $j_{X}$ and $j_{Y}$ are isometric isomorphisms, as well as all
the $j_{D^{2k}(X)}$ and $j_{D^{2k}(Y)}$. Consequently, all the limit
morphisms $i_{2k,X}:D^{2k}(X)\rightarrow \tilde{D}(X)$ and $%
i_{2k,Y}:D^{2k}(Y)\rightarrow \tilde{D}(Y)$ are isometric isomorphisms.
Especially, $i_{0,X}:X\rightarrow \tilde{D}(X)$ and $i_{0,Y}:Y\rightarrow 
\tilde{D}(Y)$ are isometric isomorphisms. Then, for every $f\in \rho 
\mathcal{N}_{F}(X,Y)$, the following diagram

$%
\begin{array}{ccc}
X & \overset{f}{\rightarrow } & Y \\ 
i_{0,X}\downarrow &  & \downarrow i_{0,Y} \\ 
\tilde{D}(X) & \underset{\tilde{D}(f^{\prime })}{\rightarrow } & \tilde{D}%
(Y))%
\end{array}%
$

\noindent in $\rho \mathcal{N}_{F}$ commutes. Therefore, the class $%
\{i_{0,X}\mid X\in Ob(\rho \mathcal{N}_{F})\}$ determines a natural
transformation $\eta :1_{\rho \mathcal{N}_{F}}\rightsquigarrow \tilde{D}$,
that is an isometric isomorphism of the functors.
\end{proof}

At the end, concerning the dual space of a hyperdual, Theorem 3, Theorem 1
(ii) and [16], Theorem 1, imply the following result:

\begin{corollary}
\label{C2}For every normed space $X,$

$(D(\tilde{D}(X),D(i_{2k}))\cong \underleftarrow{\lim }%
(D^{2k+1}(X),D(j_{D^{2k+1}(X)},\{0\}\cup \mathbb{N}))$

\noindent in $(\mathcal{B}_{F})_{1}$, where all $D(j_{D^{2k+1}(X)})$ and all 
$D(i_{2k})$ are the category retractions. \bigskip
\end{corollary}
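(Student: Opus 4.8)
The plan is to read $D(\tilde{D}(X))$ directly off the defining presentation of the hyperdual as a direct limit and then to transport that presentation through the contravariant normed dual functor by invoking Theorem \ref{T1}~(ii). Recall from Theorem \ref{T3} (and Theorem \ref{T2}, Definition \ref{D1}) that $\tilde{D}(X)=\tilde{X}=\underrightarrow{\lim}\boldsymbol{\tilde{X}}$ in $i\mathcal{N}_{F}$, where $\boldsymbol{\tilde{X}}=(D^{2k}(X),j_{D^{2k}(X)},\{0\}\cup\mathbb{N})$ is a direct sequence of isometries and the limit morphisms $i_{2k}\colon D^{2k}(X)\rightarrow\tilde{X}$ are isometries. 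Since by the result of [13] (recalled before Lemma \ref{L1}) the required inverse limits exist in $(\mathcal{B}_{F})_{1}$, the whole task reduces to applying the dual functor $D=D^{1}=D^{2\cdot 1-1}$ to this canonical direct limit.

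Applying Theorem \ref{T1}~(ii) in the case $k=1$, the functor $D$ turns the direct sequence $\boldsymbol{\tilde{X}}$ into the inverse sequence $D[\boldsymbol{\tilde{X}}]=(D^{2k+1}(X),D(j_{D^{2k}(X)}),\{0\}\cup\mathbb{N})$ and carries the direct limit into the corresponding inverse limit, isometrically in $D[i\mathcal{N}_{F}]\subseteq i\mathcal{B}_{F}\subseteq(\mathcal{B}_{F})_{1}$; that is,
\[
D(\tilde{D}(X))=D(\underrightarrow{\lim}\boldsymbol{\tilde{X}})\cong\underleftarrow{\lim}D[\boldsymbol{\tilde{X}}].
\]
Moreover, by the proof of Theorem \ref{T1}~(ii) the limit projections of this inverse limit are exactly the morphisms $D(i_{2k})\colon D(\tilde{D}(X))\rightarrow D^{2k+1}(X)$, obtained by applying $D$ to the limit isometries $i_{2k}$. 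Tracking the even/odd dual indexing, the terms $D(D^{2k}(X))=D^{2k+1}(X)$ of $D[\boldsymbol{\tilde{X}}]$ are precisely the odd duals appearing in the statement, so this is the asserted isomorphism of inverse systems, compatible with the projections $(D(i_{2k}))$.

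It remains to record that the bonding morphisms and the projections are category retractions. By [16], Theorem 1, each canonical embedding $j_{D^{2k}(X)}$ is a categorical section; applying the contravariant $D$ to the identity defining its retraction turns each bonding morphism $D(j_{D^{2k}(X)})$ into a retraction, its section being the canonical embedding $j_{D^{2k+1}(X)}$ (the standard identity $D(j_{Z})j_{D(Z)}=1_{D(Z)}$ with $Z=D^{2k}(X)$). Likewise, by Theorem \ref{T2}~(ii) each $i_{2k}$ is a section in $(\mathcal{B}_{F})_{1}$ with a retraction $r_{2k}$, whence $D(i_{2k})D(r_{2k})=1_{D^{2k+1}(X)}$ shows that every projection $D(i_{2k})$ is a retraction as well. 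I expect no genuine obstacle here: the only points needing care are verifying that $D$ meets the hypotheses of Theorem \ref{T1}~(ii) (which it does, being the case $k=1$) and keeping the parity of the dual indices straight, so that the inverse sequence of odd duals is matched with the bonding retractions $D(j_{D^{2k}(X)})$ and the limit retractions $D(i_{2k})$.
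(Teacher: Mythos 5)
Your argument is correct and is essentially the paper's own proof (the paper offers no separate argument, simply asserting that the corollary follows from Theorem~\ref{T3}, Theorem~\ref{T1}~(ii) and [16], Theorem~1): dualize the defining direct sequence of $\tilde{D}(X)$, apply Theorem~\ref{T1}~(ii) with $k=1$ to turn the direct limit into the inverse limit with projections $D(i_{2k})$, and get the retraction claims from the identity $D(j_{Z})j_{D(Z)}=1_{D(Z)}$ together with Theorem~\ref{T2}~(ii). Two cosmetic remarks only: your parenthetical inclusion $D[i\mathcal{N}_{F}]\subseteq i\mathcal{B}_{F}$ is neither needed nor true in general (the dual of an isometry need not be an isometry, only of norm $\leq 1$), and the bonding morphisms are indeed $D(j_{D^{2k}(X)})$ as you write, the statement's $D(j_{D^{2k+1}(X)})$ being an index slip in the paper.
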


\begin{center}
\textbf{References\smallskip }
\end{center}

\noindent \lbrack 1] S. F. Bellenot, \textit{The }$J$\textit{-summ of Banach
spaces}, Journal of Functional. Analysis, \textbf{48} (1982), 95-106.

\noindent \lbrack 2] S. F. Bellenot, \textit{Somewhat reflexive Banach spaces%
}, preprint (1982).

\noindent \lbrack 3] J. M. F. Castillo, \textit{The Hitchhicker Guide to
Categorical Banach Space Theory, Part I}., Extracta Mathematicae, Vol. 25, N%
\'{u}m. 2, (2010), 103-140.

\noindent \lbrack 4] P. Civin and B. Yood, \textit{Quasireflexive Banach
spaces}, Proc. Math. Amer. Soc. \textbf{8} (1957), 906-911.

\noindent \lbrack 5] J. Dugundji, \textit{Topology}, Allyn and Bacon, Inc.
Boston, 1978.

\noindent \lbrack 6] [E] J. M. Erdman, \textit{Functional Analysis and
Operator Algebras - An Introduction}$,$ Version October 4, 2015, Portland
State University (licensed PDF)

\noindent \lbrack 7] H. Herlich and G. E. Strecker, \textit{Category Theory,
An Introduction}, Allyn and Bacon Inc., Boston, 1973.

\noindent \lbrack 8] R. C. James, \textit{Bases and reflexivity of Banach
spaces}, Ann. of Math. \textbf{52} (1950), 518-527.

\noindent \lbrack 9] E. Kreyszig, \textit{Introductory Functional Analysis
with Applications}, John Wiley \& Sons, New York, 1989.

\noindent \lbrack 10] S. Kurepa, \textit{Funkcionalna analiza : elementi
teorije operatora}, \v{S}kolska knjiga, Zagreb, 1990.

\noindent \lbrack 11] S. Marde\v{s}i\'{c} and J. Segal, \textit{Shape Theory}%
, North-Holland, Amsterdam, 1982.

\noindent \lbrack 12] W. Rudin, \textit{Functional Analysis, Second Edition}%
, McGraw-Hill, Inc., New York, 1991.

\noindent \lbrack 13] Z. Semadeni and H. Zindengerg, \textit{Inductive and
inverse limits in the category of Banach spaces}, Bull. Acad. Polon. Sci. S%
\'{e}r. Sci. Math. Astronom. Phys. \textbf{13} (1965), 579-583.

\noindent \lbrack 14] N. Ugle\v{s}i\'{c}, \textit{The shapes in a concrete
category}, Glasnik. Mat. Ser. III \textbf{51}(\textbf{71}) (2016), 255-306.

\noindent \lbrack 15] N. Ugle\v{s}i\'{c}, \textit{The quotient shapes of
normed spaces and application}, submitted.

\noindent \lbrack 16] N. Ugle\v{s}i\'{c}, \textit{On the duals of normed
spaces and quotient shapes}, submitted.

\end{document}